\title[a refinement of Johnson's bounding]{a refinement of Johnson's bounding for the stable genera of Heegaard splittings}
\author{KAZUTO Takao}
\address{Department of Mathematics, Graduate school of science, Osaka University.}
\email{u713544f@ecs.cmc.osaka-u.ac.jp}
\subjclass[2000]{57N10, 57M50}
\theoremstyle{definition}
\newtheorem{Def}{Definition}
\theoremstyle{plain}
\newtheorem{Thm}[Def]{Theorem}
\newtheorem{Lem}[Def]{Lemma}
\newtheorem{Prop}[Def]{Proposition}
\begin{document}
\begin{abstract}
For each integer $k\geq 2$, Johnson gave a $3$-manifold with Heegaard splittings of genera $2k$ and $2k-1$ such that any common stabilization of these two surfaces has genus at least $3k-1$.
We modify his argument to produce a $3$-manifold with two Heegaard splitings of genus $2k$ such that any common stabilization of them has genus at least $3k$.
\end{abstract}
\maketitle

\section{Introduction}

A {\it genus} $g$ {\it Heegaard splitting} for a closed $3$-manifold $M$ is a triple $(\Sigma ,H^-,H^+)$ where $H^-,H^+$ are genus $g$ handlebodies such that $H^-\cup H^+=M$ and $H^-\cap H^+=\partial H^-=\partial H^+ =\Sigma $.
The genus $g$ surface $\Sigma $ is called the {\it Heegaard surface}.
Any closed, orientable, connected $3$-manifold has Heegaard splittings.
Two Heegaard splittings for the same $3$-manifold are called {\it isotopic} if there is an ambient isotopy taking one of the Heegaard surfaces to the other.

Suppose $\alpha $ is a properly embedded arc in $H^+$ parallel to $\Sigma $.
Add a regular neighborhood of $\alpha $ to $H^-$ and delete it from $H^+$.
Then the result is a new Heegaard splitting whose genus is one greater than that of the original.
A {\it stabilization} of a Heegaard splitting is another splitting obtained by a finite sequence of such processes.
Any two Heegaard splittings of the same $3$-manifold have a common stabilization \cite{reidemeister}, \cite{singer}.
That is to say, there is a third Heegaard splitting which is isotopic to a stabilization of each of the initial splittings.
The {\it stable genus} of two Heegaard splittings is the minimal genus of their common stabilizations.

It had been conjectured that the stable genus of any two Heegaard splittings is at most $p+1$, where $p$ is the larger of the two initial genera, which is called {\it the Stabilization Conjecture}.
This conjecture has been verified for many classes of $3$-manifolds, including Seifert fibered spaces \cite{schultens}, most genus-two $3$-manifolds \cite{rubinstein2} (see also \cite{berge}) and most graph manifolds \cite{derby1} (see also \cite{sedgwick}).

Johnson \cite{johnson2} gave a counterexample for this conjecture.
For each $k\geq 2$, he constructed an irreducible toroidal $3$-manifold with Heegaard splittings of genera $2k-1$ and $2k$ such that the stable genus of these two splittings is $3k-1$.
In fact, we can see that the stable genus is at most $3k-1$ by a simple observation, and the point is the bounding from below.
His construction can be easily modified to produce an atoroidal $3$-manifold with Heegaard splittings of genera $2k-n$ and $2k$ whose stable genus is $3k-n$, where $n$ is larger than $1$.
However, the larger $n$ is, the closer the stable genus is to the genus of the original.
If $n$ is larger than $k-2$, it does not give a counterexample for the conjecture.
We modify his construction to the opposite direction and refine the bounding for the stable genus from bellow as the following:

\begin{Thm}\label{main}
For every $k\geq 2$, there exists a $3$-manifold with two Heegaard splittings of genus $2k$ whose stable genus is $3k$.
\end{Thm}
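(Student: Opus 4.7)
The plan is to follow and refine Johnson's construction. Johnson builds his $3$-manifold $M$ by attaching two handlebodies along a homeomorphism whose induced action on the curve complex has very large distance, which by Hempel forces the Heegaard distance of the resulting splitting to be large. He then exhibits a second Heegaard splitting obtained from a small modification (a genus drop by a vertical ``twist'' construction), so that the two splittings have genera $2k-1$ and $2k$. The $3k-1$ lower bound on the stable genus is extracted by a sweepout / Rubinstein--Scharlemann graphic analysis: any common stabilization $\Sigma$ can be swept across $M$ against each original Heegaard surface, and a careful Euler characteristic count of the compressing disks on the two sides yields the inequality.

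My first step would be to re-examine Johnson's building blocks and replace his asymmetric genus-drop construction with a symmetric one: instead of producing a second splitting of genus one less than the first, I would glue two copies of Johnson's block (or insert an additional handle on the smaller side) so that two genuinely distinct but equal-genus splittings of genus $2k$ appear on the same manifold. The high-distance hypothesis would be preserved block by block, so Hempel's theorem still applies to each splitting. The second step is to verify the upper bound $3k$ by exhibiting an explicit common stabilization: one should be able to add $k$ stabilizing handles to either surface, guided by dual arcs coming from the extra handle introduced in the symmetrization, and check by direct handle manipulation that the two resulting genus $3k$ surfaces are isotopic.

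The main obstacle is the lower bound $3k$. I would adapt Johnson's sweepout argument: given any common stabilization $\Sigma$ of genus $g$, place it in general position with respect to sweepouts of the two original splittings and analyze the resulting Rubinstein--Scharlemann graphic. The large distance of each splitting forces $\Sigma$ to admit enough compressing disks on each side to realize essential curves that must lie far apart in the curve complex. In Johnson's asymmetric setting, the counting yields $g \geq 3k-1$; in the symmetrized setting, the ``missing'' handle that previously broke the symmetry now contributes an additional compressing disk that must be genuinely new, bumping the count to $g \geq 3k$. Making this heuristic precise — in particular, controlling the interaction of the two sweepouts at the critical level and showing that the extra disk cannot be traded against existing ones — is where the technical core of the argument lies, and is the step most likely to require a careful, manifold-by-manifold bookkeeping rather than an abstract appeal to Johnson's lemma.
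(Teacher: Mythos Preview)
Your proposal is a plan rather than a proof, and its central steps are left unspecified in ways that matter. The paper's construction is not a symmetrized version of Johnson's toroidal amalgamation; it replaces the torus gluing by an amalgamation along \emph{spheres}, i.e.\ it forms the connected sum $M=M_1\#M_2$ of two closed $3$-manifolds, each equipped with a genus-$k$ Heegaard splitting $(T_i,G_i^-,G_i^+)$ of Hempel distance at least $6k$. The two genus-$2k$ splittings of $M$ are the connected sums $(T_1,G_1^-,G_1^+)\#(T_2,G_2^-,G_2^+)$ and $(T_1,G_1^-,G_1^+)\#(T_2,G_2^+,G_2^-)$: they differ exactly by flipping the second summand. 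The upper bound $3k$ then comes for free from the fact that the flip genus of any genus-$k$ splitting is at most $2k$, not from an ad hoc handle manipulation.

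The lower bound is where your outline has a genuine gap. It is \emph{not} obtained by locating one extra compressing disk in Johnson's count; the mechanism is different and requires two ingredients you do not mention. First, because the common stabilization $\Sigma'$ spans $(T_2,G_2^-,G_2^{*+})$ both positively \emph{and} negatively (it inherits one sign from each of the two original splittings), the second clause of Lemma~\ref{spanning} forces some level surface $\Sigma'_{s_2}$ to meet $M_2^*$ in genus at least $2k$, while another level $\Sigma'_{s_1}$ meets $M_1^*$ in genus at least $k$. Second, a new elementary lemma (Lemma~\ref{planar}) about planar separating surfaces in a product $\Sigma\times[s_1,s_2]$ shows that these two genus contributions, occurring on disjoint pieces $M_1^*$ and $M_2^*$ at different levels, must add: $g(\Sigma')\ge k+2k=3k$. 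Your heuristic that ``the missing handle contributes an additional compressing disk that must be genuinely new'' does not supply either of these ingredients, and without them there is nothing in your sketch that upgrades Johnson's $3k-1$ to $3k$.
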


This $3$-manifold is reducible.
Actually, we get it by taking connected sum of two closed $3$-manifolds with Heegaard splittings of genus $k$ with high Hempel distance (see Section \ref{SplittingS}).
It may be a strong point of this paper that we can construct a counterexample for the Stabilization Conjecture from genus-two $3$-manifolds by substituting $2$ for $k$.
There are fairly many studies on genus-two $3$-manifolds.
For instance, Kobayashi \cite{kobayashi1} gave a complete list of genus-two $3$-manifolds admitting nontrivial torus decompositions.

Prior to Johnson \cite{johnson2}, a counterexample for the ``oriented version" of the Stabilization Conjecture was given by Hass, Thompson and Thurston \cite{hass}.
In the ``oriented version", two Heegaard splittings are called isotopic only if the isotopy preserves the order of the handlebodies.
For a Heegaard splitting, the minimal genus of its stabilizations where the handlebodies can be interchanged by an isotopy is called the {\it flip genus}.
They showed that there is a Heegaard splitting whose flip genus is twice the initial genus.

For the oriented version, Johnson \cite{johnson1} gave an estimate for general Heegaard splittings.
He showed that the flip genus of any Heegaard splitting of genus $k$ with Hempel distance $d$ is at least ${\rm min}\{ 2k,\frac{1}{2}d\} $.
His counterexample in \cite{johnson2} and ours for the non-oriented version can be viewed as applications of this estimation.

Bachman \cite{bachman2} also gave several counterexamples using different techniques.
One is for the oriented version, and another is for the non-oriented version.

I would like to express my appreciation to Ken'ichi Ohshika, Tsuyoshi Kobayashi and Makoto Sakuma for their advices and encouragement.
I would also like to thank Jesse Johnson for helpful comments.

\section{Heegaard splittings}

To begin with, we will define Heegaard splittings for compact $3$-manifolds possibly with boundaries.
A {\it compression body} is a connected $3$-manifold $H$ which can be obtained from $S\times [0,1]$ by attaching finitely many $1$-handles to $S\times \{ 1\}$ where $S$ is a closed, orientable, possibly disconnected surface.
We will use the notations like $\partial _-H=S\times \{ 0\} $ and $\partial _+H=\partial H\setminus \partial _-H$.
Handlebodies are regarded as the extreme cases of compression bodies, i.e. $\partial _-H=\emptyset $.
A {\it Heegaard splitting} for a compact $3$-manifold $M$ is a triple $(\Sigma ,H^-,H^+)$ where $H^-,H^+$ are compression bodies such that $H^-\cup H^+=M$ and $H^-\cap H^+=\partial _+H^-=\partial _+H^+ =\Sigma $.
The {\it genus} of $(\Sigma ,H^-,H^+)$ is the genus of $\Sigma $.

In addition to stabilizations, we will use some sorts of operations to construct new Heegaard splittings from given Heegaard splittings.
Now, we will define such operations in the next three paragraphs:

Suppose $(\Sigma _1,H^-_1,H^+_1)$ and $(\Sigma _2,H^-_2,H^+_2)$ are Heegaard splittings for compact $3$-manifolds $M_1$ and $M_2$, respectively.
Let $B_i$ be a ball in $M_i$ such that $\Sigma _i\cap B_i$ is an equatorial plane of $B_i$ for each $i=1,2$.
Suppose $\varphi :\partial B_1\rightarrow \partial B_2$ is a homeomorphism such that $\varphi (H^-_1\cap \partial B_1)=H^-_2\cap \partial B_2$ and $\varphi (H^+_1\cap \partial B_1)=H^+_2\cap \partial B_2$.
Let $M$ be the $3$-manifold obtained by gluing the closures of $M_1\setminus B_1$ and $M_2\setminus B_2$ by $\varphi $, namely, the connected sum of $M_1$ and $M_2$.
Let $H^-$ be the compression body obtained by gluing the closures of $H^-_1\setminus B_1$ and $H^-_2\setminus B_2$ by $\varphi $ and let $H^+$ be the compression body obtained by gluing the closures of $H^+_1\setminus B_1$ and $H^+_2\setminus B_2$ by $\varphi $.
Then $(\Sigma ,H^-,H^+)$ is a Heegaard splitting for $M$ where $\Sigma =\partial _+H^-=\partial _+H^+$.
It is called the {\it connected sum} of $(\Sigma _1,H^-_1,H^+_1)$ and $(\Sigma _2,H^-_2,H^+_2)$.

Suppose $M_1,M_2,(\Sigma _1,H^-_1,H^+_1)$ and $(\Sigma _2,H^-_2,H^+_2)$ are as above.
Suppose $\partial _-H^+_1$ is non-empty and homeomorphic to $\partial _-H^+_2$.
Let $M$ be the union of $M_1$ and $M_2$ identifying $\partial _-H^+_1$ with $\partial _-H^+_2$ by some homeomorphism.
Since $H^+_i$ is a compression body, it can be decomposed into a product manifold $\partial _-H^+_i\times [0,1]$ and a collection of $1$-handles for each $i=1,2$.
The part $(\partial _-H^+_1\times [0,1])\cup (\partial _-H^+_2\times [0,1])$ of $M$ can be collapsed without changing the topology of $M$.
Then we can regard the $1$-handles which belonged to $H^+_1$ are attached to $H^-_2$, forming a new compression body $H^+$.
Similarly, $H^-_1$ and the $1$-handles which belonged to $H^+_2$ form another compression body $H^-$.
Then $(\Sigma ,H^-,H^+)$ is a Heegaard splitting for $M$ where $\Sigma =\partial _+H^-=\partial _+H^+$.
We will say that $(\Sigma ,H^-,H^+)$ is the {\it amalgamation} of $(\Sigma _1,H^-_1,H^+_1)$ and $(\Sigma _2,H^-_2,H^+_2)$.
Note that $H^-_1\subset H^-, H^-_2\subset H^+$ and $(\Sigma ,H^+,H^-)$ is the amalgamation of $(\Sigma _2,H^-_2,H^+_2)$ and $(\Sigma _1,H^-_1,H^+_1)$.

Suppose $M$ is a compact $3$-manifold with a single boundary component, and $(\Sigma ,H^-,H^+)$ is a Heegaard splitting for $M$ such that $\partial _-H^+=\partial M$.
Decompose $H^+$ into a product manifold $\partial _-H^+\times [0,1]$ and a collection of $1$-handles.
Let $\alpha $ be a vertical arc in $\partial _-H^+\times [0,1]$.
Add a neighborhood of the union of $\alpha $ and $\partial _-H^+$ to $H^-$, to obtain a compression body $H^{\prime +}$.
Then the closure of the complement of $H^{\prime +}$ in $M$ is homeomorphic to the union of $(\partial _-H^+\setminus (\text{an open disk}))\times [0,1]$ and $1$-handles.
This is a handlebody, denoted by $H^{\prime -}$.
We will call $(\Sigma ',H^{\prime -},H^{\prime +})$ the {\it boundary stabilization} of $(\Sigma ,H^-,H^+)$ where $\Sigma '=\partial H^{\prime -}=\partial _+H^{\prime +}$.
We are afraid the labels of $H^{\prime -}$ and $H^{\prime +}$ are confusing, but we would like to keep the condition that $\partial M$ is contained in the latter compression body.

Johnson's counterexample was constructed by amalgamations along the torus boundaries.
All his arguments in \cite{johnson2} can be applied also if the boundaries have genus more than one.
We will make the same construction changing the place of torus boundaries by sphere boundaries.
Though it is common in theories on Heegaard splittings to assume that the $3$-manifolds do not have sphere boundaries, we do not have to do so at least in the above definitions.
It is useful in our arguments to deal with amalgamations along sphere boundaries while they are no other than connected sums as the following:

\begin{Prop}\label{prop1}
Suppose $(\Sigma _i,H^-_i,H^+_i)$ is a Heegaard splitting for a closed $3$-manifold $M_i$, and $B_i$ is an open ball in $H^+_i$ for $i=1,2$.
Then the amalgamation of $(\Sigma _1,H^-_1,H^+_1\setminus B_1)$ and $(\Sigma _2,H^-_2,H^+_2\setminus B_2)$ is isotopic (in the oriented version) to the connected sum of $(\Sigma _1,H^-_1,H^+_1)$ and $(\Sigma _2,H^+_2,H^-_2)$.
\end{Prop}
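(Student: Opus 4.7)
The plan is to identify both sides of the amalgamated splitting with the corresponding sides of the stated connect-sum splitting, and then match the two embeddings by an ambient isotopy of $M_1 \# M_2$.

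By the amalgamation construction, the new $H^-$ is $H^-_1$ together with the $g_2$ one-handles of the compression body $H^+_2 \setminus B_2$ pushed through the collapsed sphere collar to attach to $\Sigma_1$; symmetrically, the new $H^+$ is $H^-_2$ together with the $g_1$ one-handles of $H^+_1 \setminus B_1$ attached to $\Sigma_2$. For the connect sum $(\Sigma_1, H^-_1, H^+_1) \# (\Sigma_2, H^+_2, H^-_2)$, the definitions give a new $H^- = (H^-_1 \setminus C_1) \cup_{\varphi} (H^+_2 \setminus C_2)$ glued along hemisphere disks of the connect-sum sphere $\partial C_1 = \partial C_2$, and symmetrically for $H^+$, where $C_i \subset M_i$ are balls meeting $\Sigma_i$ equatorially.

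The key observation is that both presentations describe the same abstract handlebody. Viewing $H^+_2$ as a ball $B^3$ with $g_2$ one-handles attached, and choosing the half-ball $C_2 \cap H^+_2$ to lie inside this $B^3$, the disk-gluing $(H^-_1 \setminus C_1) \cup_{\varphi} (H^+_2 \setminus C_2)$ absorbs the $B^3$ of $H^+_2$ into $H^-_1$ and leaves precisely $H^-_1$ with the $g_2$ one-handles of $H^+_2$ attached --- exactly the amalgamation description. The analogous argument identifies the two descriptions of $H^+$. The swap $(\Sigma_2, H^-_2, H^+_2) \to (\Sigma_2, H^+_2, H^-_2)$ is forced by the hypothesis $B_i \subset H^+_i$: the amalgamation sphere lies in $H^+_1 \cap H^+_2$, so across it the $H^+_2$-side of $\Sigma_2$ glues to the $H^-_1$-side of $\Sigma_1$, matching the connect-sum convention with $H^+_2$ and $H^-_2$ swapped.

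To conclude, I would produce the ambient isotopy. The two presentations of $H^-$ differ only in how the one-handles of $H^+_2$ are connected to $H^-_1$ across the connect-sum region $(H^+_1 \setminus B_1) \cup (H^+_2 \setminus B_2)$; this region is sufficiently connected for any two systems of disjoint attaching arcs with freely chosen endpoints on $\Sigma_1$ to be ambient isotoped into one another. Running the analogous argument for $H^+$ simultaneously gives the full isotopy of Heegaard splittings. The main obstacle is precisely this simultaneity --- arranging the two ambient isotopies so that the two families of attaching arcs do not interfere with each other --- which should follow from a standard transversality argument inside the product region, but requires careful bookkeeping of orientations to ensure the resulting isotopy respects the oriented Heegaard structure.
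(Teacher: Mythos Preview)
Your approach is essentially the same as the paper's, which is a brief picture proof: view $H^+_1$ as a ball with $1$-handles and $H^+_2$ similarly but ``inside out'', draw the amalgamation, draw the connected sum with balls $B'_i$ meeting $\Sigma_i$ in equatorial disks, and observe the two pictures coincide. Your verbal description of how the ball part of $H^+_2$ gets absorbed into $H^-_1$ while the $g_2$ one-handles remain is exactly this picture spelled out in words, and it is correct.

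Where you go astray is in the final paragraph. The ``simultaneity obstacle'' you worry about does not exist. An ambient isotopy of $M$ is a one-parameter family of diffeomorphisms of the whole manifold; if it carries the amalgamation $H^-$ onto the connected-sum $H^-$, it automatically carries the complementary handlebody onto the complementary handlebody. There is no need to construct a second isotopy for $H^+$ and then reconcile the two. Equivalently, it suffices to produce an ambient isotopy carrying one Heegaard \emph{surface} to the other and matching the labelled sides; the rest follows. So your argument is already complete at the point where you have identified the two descriptions of $H^-$ and noted that the attaching arcs of the $g_2$ one-handles can be isotoped to standard position through the product region --- you should simply stop there.
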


\begin{proof}
See pictures in the next page.
In Figure \ref{fig5}, $H^+_1$ is regarded as a ball attached $1$-handles while $H^-_1$ as its complement.
In Figure \ref{fig6}, $H^+_2$ and $H^-_2$ are figured similarly but inside out.
The handlebodies $H^+_1,H^-_2$ are painted gray and $B_1,B_2$ are patterned with meshes.
The amalgamation is constructed by gluing $M_1\setminus B_1$ and $M_2\setminus B_2$ as Figure \ref{fig7} and collapsing the product part as Figure \ref{fig8}.
On the other hand, choose a ball $B'_i$ which intersects $\Sigma _i$ in a disk for each $i=1,2$ as Figures \ref{fig9}, \ref{fig10}.
The connected sum is constructed by gluing $M_1\setminus B'_1$ and $M_2\setminus B'_2$ as Figure \ref{fig11}, which is equivalent to Figure \ref{fig8}.
\end{proof}

\begin{figure}[ht]
\begin{tabular}{ccc}
\begin{minipage}{3.8cm}
\begin{center}
\caption{}
\vspace{.2cm}
\includegraphics[width=2.5cm]{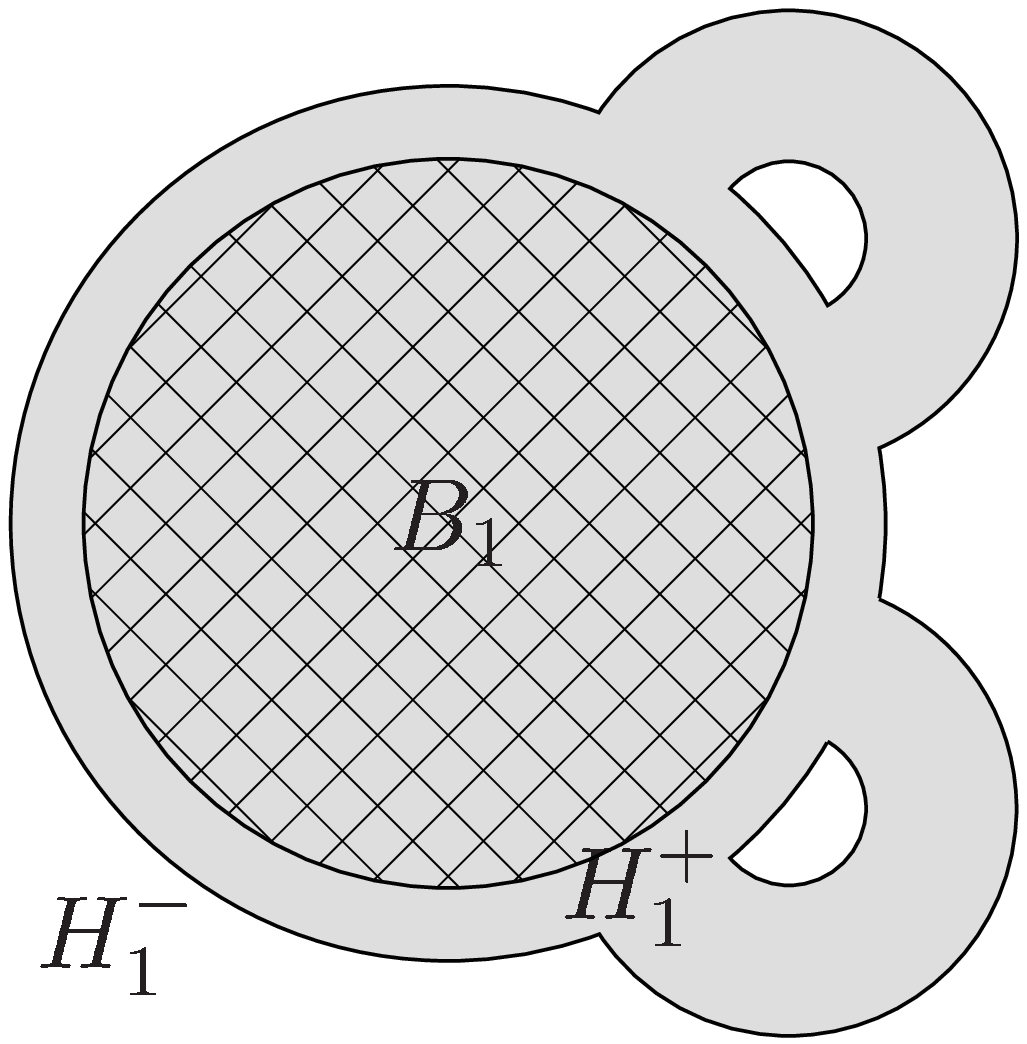}
\label{fig5}
\end{center}
\end{minipage}
\begin{minipage}{3.8cm}
\begin{center}
\caption{}
\vspace{.2cm}
\includegraphics[width=2.5cm]{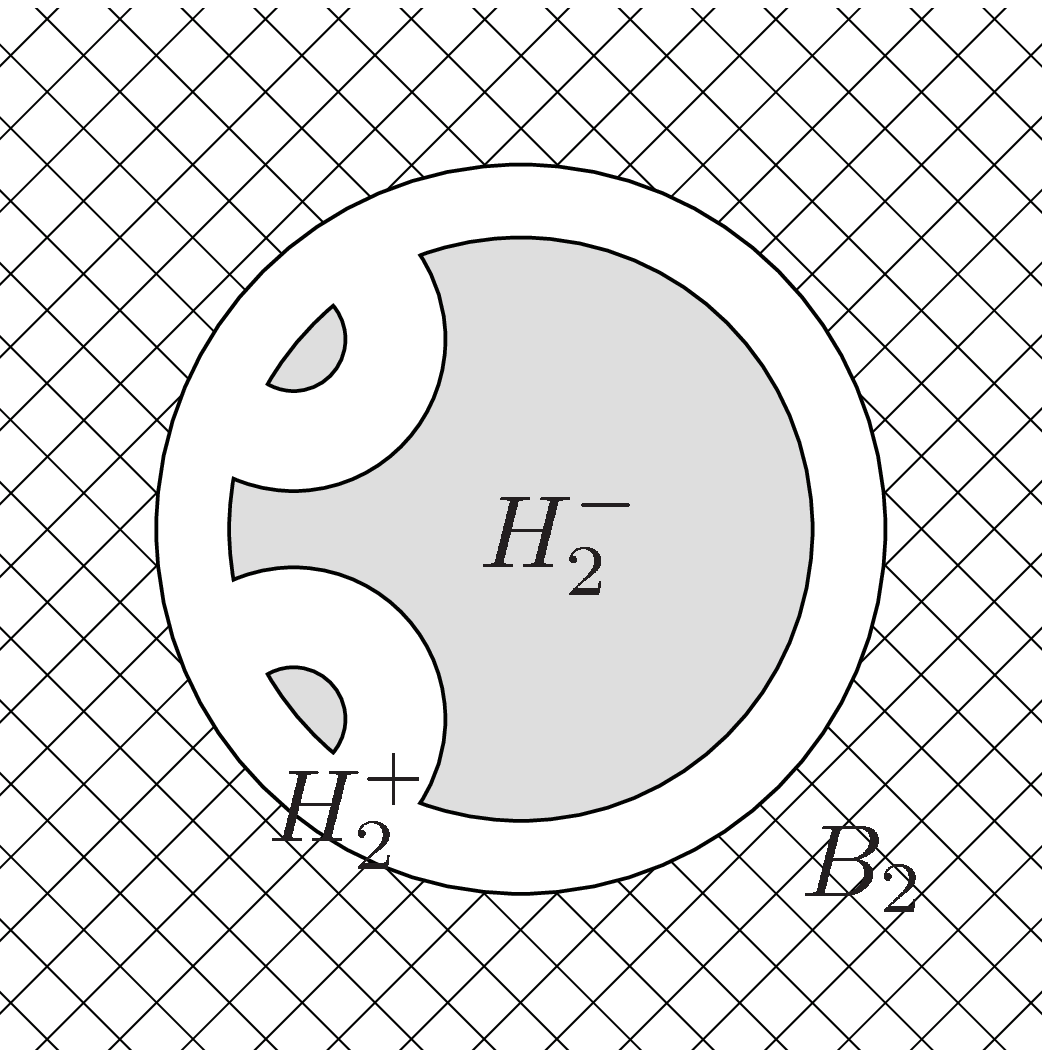}
\label{fig6}
\end{center}
\end{minipage}\\[0.2cm]
\begin{minipage}{3.8cm}
\begin{center}
\caption{}
\vspace{.2cm}
\includegraphics[width=2.5cm]{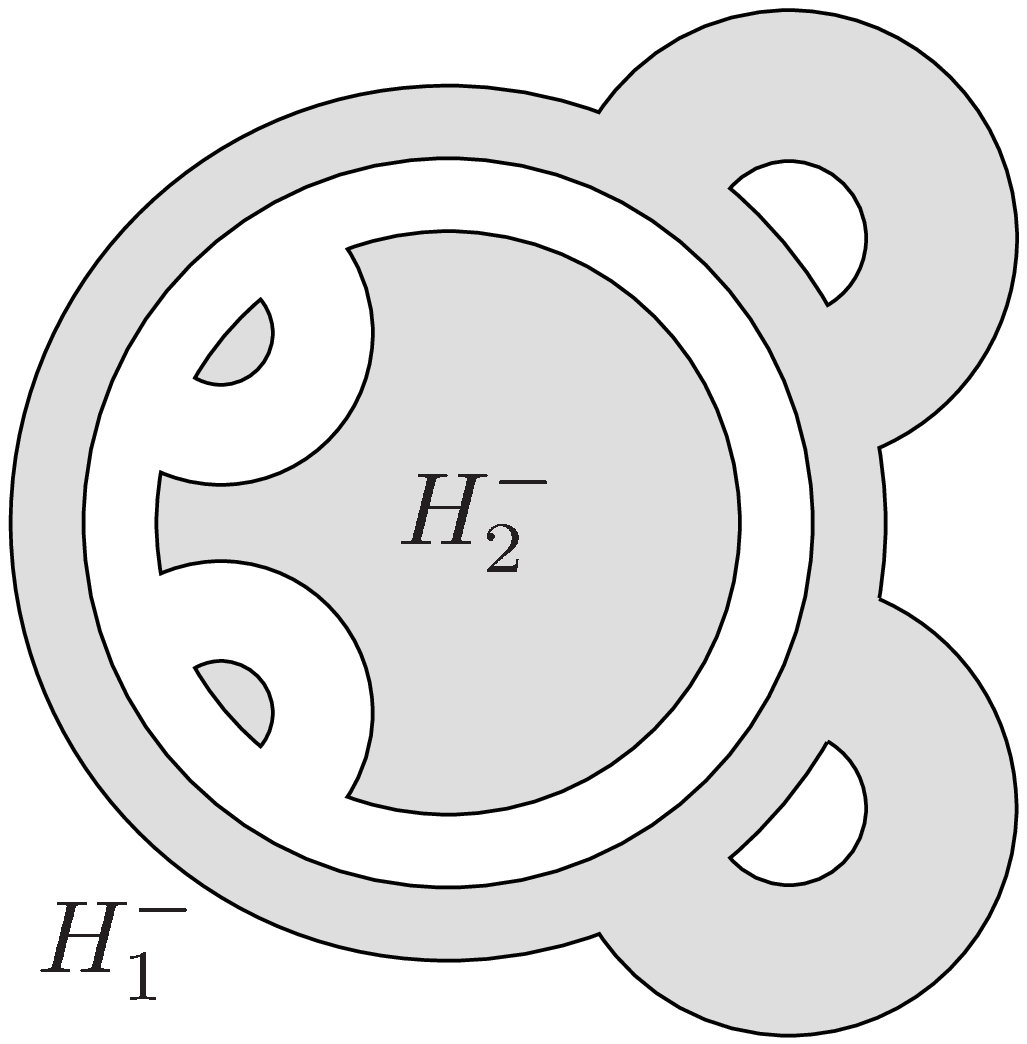}
\label{fig7}
\end{center}
\end{minipage}
\begin{minipage}{3.8cm}
\begin{center}
\caption{}
\vspace{.2cm}
\includegraphics[width=2.5cm]{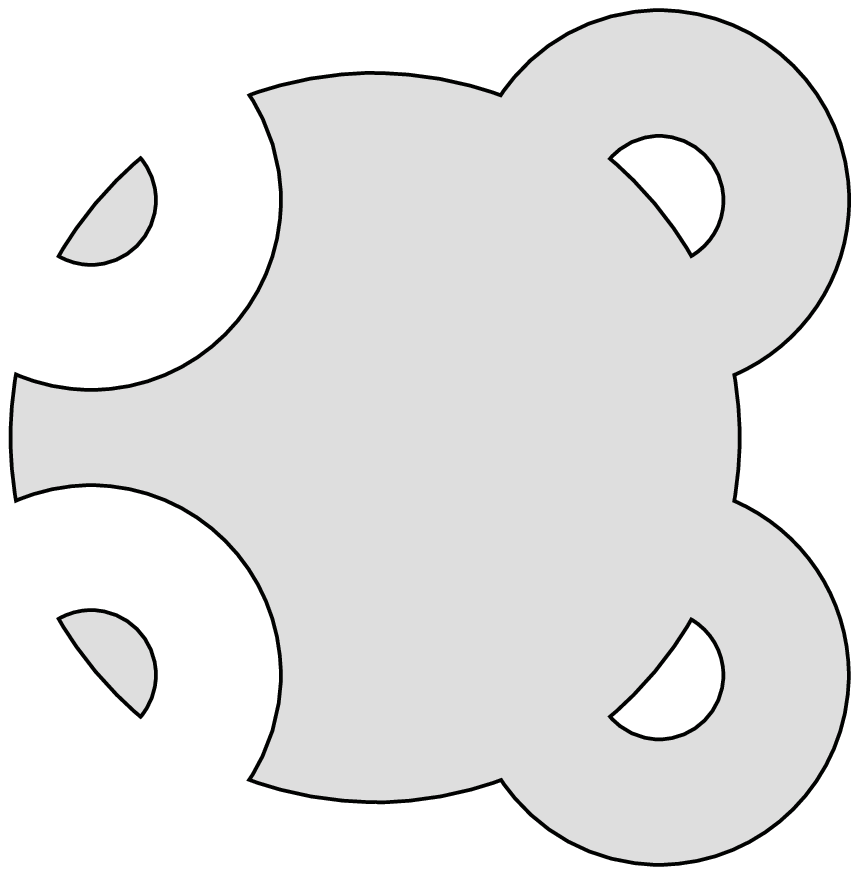}
\label{fig8}
\end{center}
\end{minipage}\\[0.2cm]
\begin{minipage}{3.8cm}
\begin{center}
\caption{}
\vspace{.2cm}
\includegraphics[width=2.5cm]{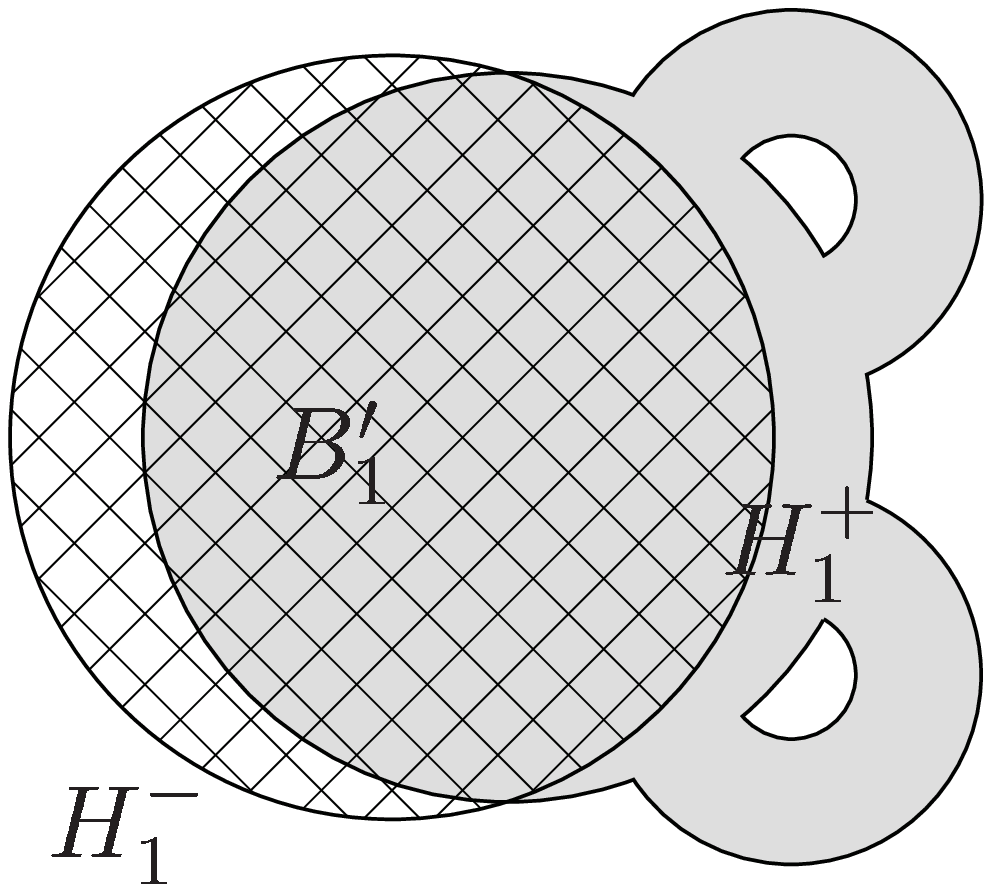}
\label{fig9}
\end{center}
\end{minipage}
\begin{minipage}{3.8cm}
\begin{center}
\caption{}
\vspace{.2cm}
\includegraphics[width=2.5cm]{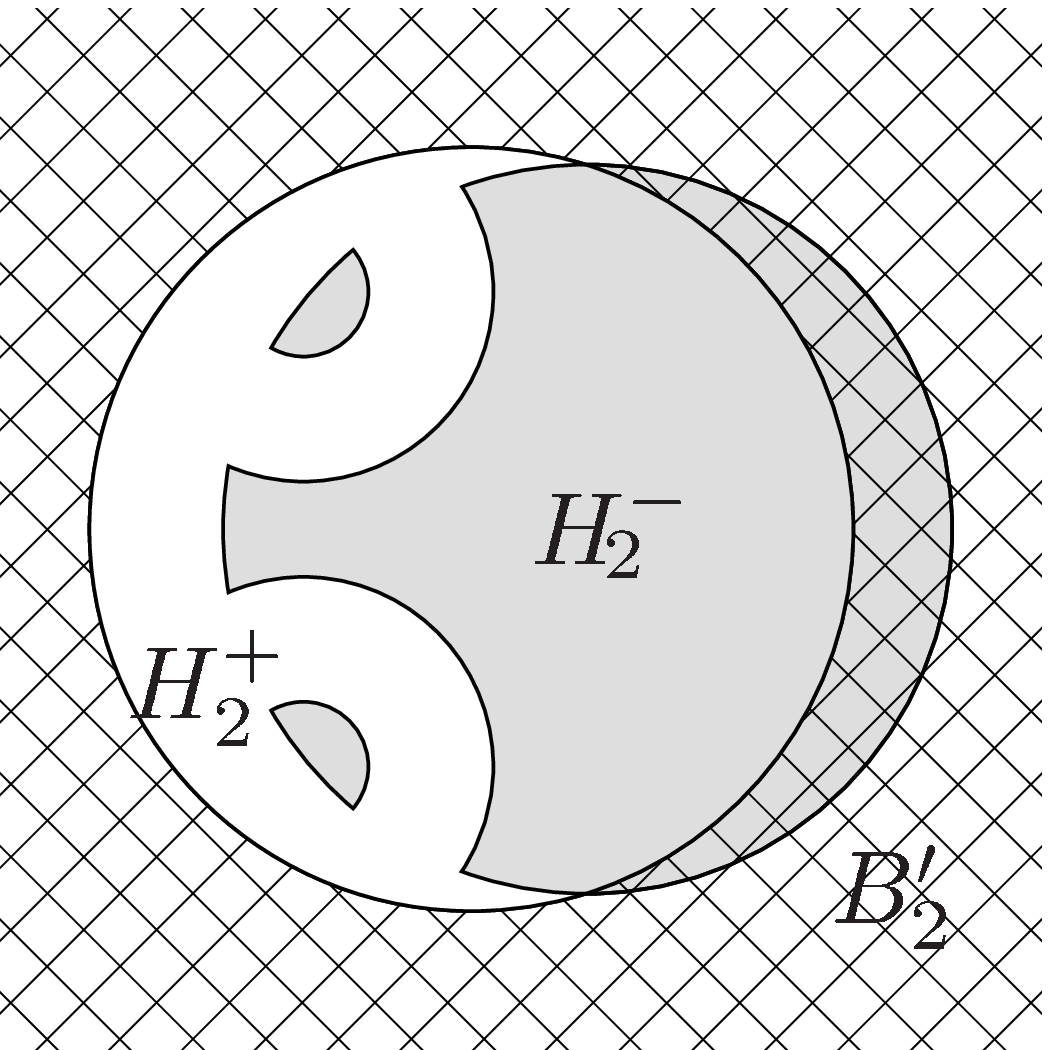}
\label{fig10}
\end{center}
\end{minipage}
\begin{minipage}{3.8cm}
\begin{center}
\caption{}
\vspace{.2cm}
\includegraphics[width=2.5cm]{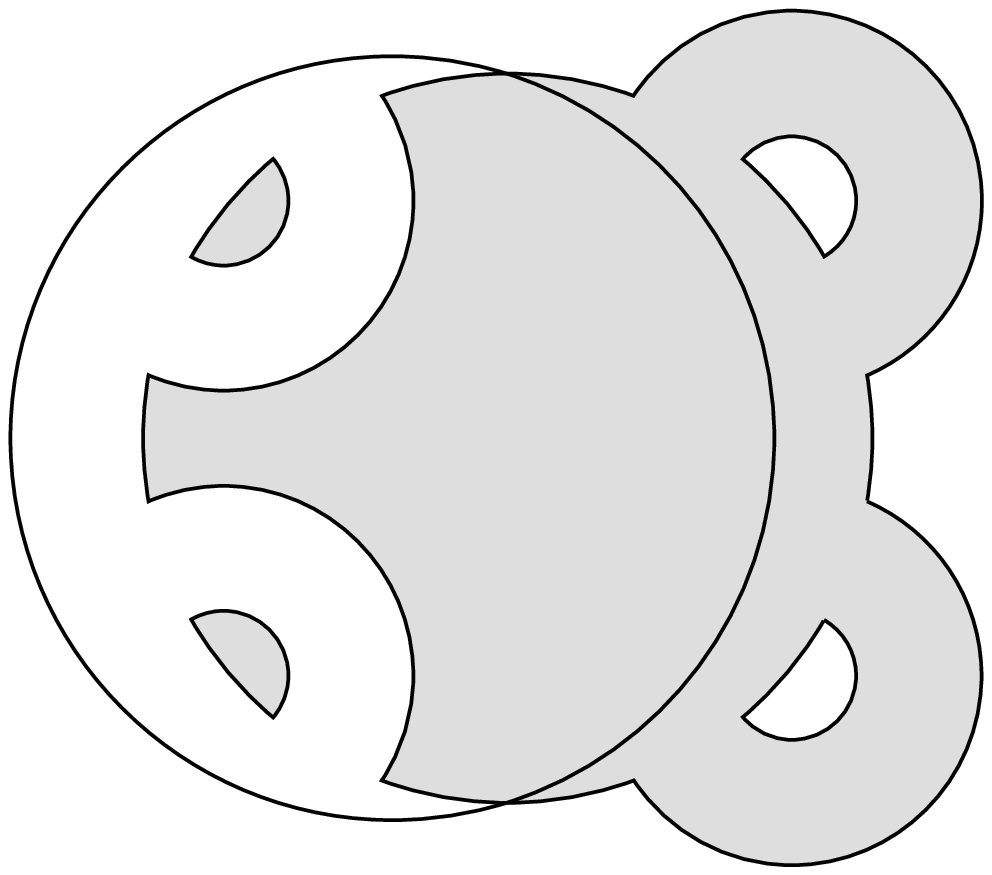}
\label{fig11}
\end{center}
\end{minipage}
\end{tabular}\\
\end{figure}

\begin{Prop}\label{prop2}
Suppose $(\Sigma ,H^-,H^+)$ is a Heegaard splitting for a closed\break $3$-manifold $M$, and $B^-,B^+$ are open balls in $H^-,H^+$, respectively.
Then the boundary stabilization of $(\Sigma ,H^-,H^+\setminus B^+)$ is isotopic (in the oriented version) to $(\Sigma ,H^+,H^-\setminus B^-)$.
\end{Prop}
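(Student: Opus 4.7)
The plan is to arrange the balls $B^\pm$ and the arc $\alpha$ in a convenient position, and then construct a homeomorphism between the two splittings that is the identity outside a single ball.

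First, I would use that the isotopy class of each splitting in play is independent of the choice of removed ball inside its handlebody, to assume that $B^-$ and $B^+$ both lie inside a single ball $B \subset M$ with $B \cap \Sigma$ an equatorial disk of $B$ and $B^\pm \subset B \cap H^\pm$. I would also choose the vertical arc $\alpha$ for the boundary stabilization to lie in $B \cap H^+$, running from $\partial B^+$ to $\Sigma \cap B$. Then the antenna $\mathrm{nbhd}(\alpha \cup \partial B^+)$ added to $H^-$ is contained in $B$, so outside $B$ the boundary stabilization already agrees with the right-hand side splitting under the identity map: $H'^- \cap (M \setminus B) = H^+ \cap (M \setminus B)$ is the ``$-$'' side of both, and $H'^+ \cap (M \setminus B) = H^- \cap (M \setminus B) = (H^- \setminus B^-) \cap (M \setminus B)$ is the ``$+$'' side of both.

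It then suffices to construct a homeomorphism $\phi \colon B \setminus B^+ \to B \setminus B^-$ fixing $\partial B$ pointwise, sending $\Sigma' \cap B$ to $\Sigma \cap B$, and respecting the labels. Both $B \setminus B^+$ and $B \setminus B^-$ are spherical shells, and in each case a properly embedded disk with boundary $\Sigma \cap \partial B$ cuts the shell into a ball on the $H^+$ side (the ``$-$'' piece) and a copy of $S^2 \times I$ containing the inner boundary sphere (the ``$+$'' piece). On the left the disk $\Sigma' \cap B$ extends into $B \cap H^+$ along a ``finger'' that runs along $\alpha$ and sheaths $\partial B^+$, while on the right it is simply the flat equatorial disk $\Sigma \cap B$. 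I would obtain $\phi$ by first isotoping the finger back to the equator rel $\partial B$, which takes $\Sigma' \cap B$ to $\Sigma \cap B$ but pushes $B^+$ to a new position on the $H^-$ side of $\Sigma \cap B$, and then further isotoping that image to $B^-$ inside $H^- \cap B$. Gluing $\phi$ to the identity outside $B$ yields the required isotopy in the oriented version.

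The main obstacle is the first isotopy inside $B$: one must be sure that the finger, which sheaths $\partial B^+$ from the $H^+$ side, can be unwrapped to the equator, and that during this process the ``$-$'' ball sweeps out the full $H^+$ half of $B$ while the inner boundary sphere moves from the $H^+$ side to the $H^-$ side. This is the analogue, inside $B$, of the picture-based equivalence carried out for Proposition \ref{prop1}, and can be made explicit in essentially the same way as Figures \ref{fig5}--\ref{fig11}.
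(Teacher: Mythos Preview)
Your argument is correct and follows essentially the same idea as the paper's proof, which is the single sentence ``This can be proved by pushing $B^+$ into $H^-$ from $H^+$.'' Your localization of $B^\pm$ and $\alpha$ to a single ball $B$, followed by the ambient isotopy of $B$ that retracts the finger and carries $B^+$ across the equator to $B^-$, is precisely a careful implementation of that push; the paper simply leaves these details to the reader.
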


This can be proved by pushing $B^+$ into $H^-$ from $H^+$.
The details are left to the reader.

\section{Sweep-outs and graphics}\label{Sweep-outs}

Rubinstein and Scharlemann \cite{rubinstein1} introduced a powerful machinery to analyze Heegaard splittings.
It is called the {\it Rubinstein-Scharlemann graphic} or just the {\it graphic} for short.
Roughly speaking, it is a $1$-complex in $[-1,1]\times [-1,1]$ representing the relation between two Heegaard splittings for a $3$-manifold.
While their original construction was based on the Cerf theory \cite{cerf}, it is useful to define it in terms of {\it stable} maps after Kobayashi and Saeki \cite{kobayashi2}.

Suppose $X,Y$ are smooth manifolds and $\varphi ,\psi :X\rightarrow Y$ are smooth maps.
The maps $\varphi $ and $\psi $ are called {\it isotopic} if there are diffeomorphisms $h_X:X\rightarrow X$ and $h_Y:Y\rightarrow Y$, each isotopic to the identity map on its respective space, such that $\varphi =h_Y\circ \psi \circ h_X$.
A smooth map $\varphi :X\rightarrow Y$ is called stable if there exists an open neighborhood $U$ of $\varphi $ in $C^\infty (X,Y)$ (under the Whitney $C^\infty $ topology, see \cite{hirsch}) such that every map in $U$ is isotopic to $\varphi $.
A Morse function is a stable function from a smooth manifold to $\mathbb{R}$.

Suppose $M$ is a compact, orientable, connected, smooth $3$-manifold, and $\partial M=\partial _-M\sqcup\partial _+M$ is a partition of boundary components of $M$.
Let $\Theta ^-$ be a finite graph in $M$ adjacent to all components of $\partial _-M$ and let $\Theta ^+$ similarly for $\partial _+M$.
A {\it sweep-out} for $M$ is a smooth function $f:M\rightarrow [-1,1]$ such that $f^{-1}(t)$ is a closed, connected surface parallel to $f^{-1}(0)$ for $t\in (-1,1)$, while $f^{-1}(-1)=\Theta ^-\cup \partial _-M$ and $f^{-1}(1)=\Theta ^+\cup \partial _+M$.
The sets $\Theta ^-\cup \partial _-M$ and $\Theta ^+\cup \partial _+M$ are called the {\it spines} of $f$.
We will say that $f$ {\it represents} a Heegaard splitting $(\Sigma ,H^-,H^+)$ for $M$ if $f$ can be isotoped so that $f^{-1}(0)=\Sigma ,f^{-1}(-1)\subset H^-$ and $f^{-1}(1)\subset H^+$.

Suppose $M_i$ is a compact, orientable, connected, smooth, $3$-dimensional submanifold of a smooth $3$-manifold $M$, and $f_i$ is a sweep-out for $M_i$ for each $i=1,2$.
Assume $M_1\cap M_2$ is a non-empty $3$-dimensional submanifold of $M$.
We define a smooth map $f_1\times f_2:M_1\cap M_2\rightarrow [-1,1]\times [-1,1]$ by $(f_1\times f_2)(p)=(f_1(p),f_2(p))$.
In the case when $M_1=M_2=M$, Kobayashi and Saeki \cite{kobayashi2} showed that we can deform $f_1$ and $f_2$ by an arbitrarily small isotopy so that $f_1\times f_2$ is stable on the complement of the spines of $f_1$ and $f_2$.
An almost identical argument induces the same property in the general case.
Thus, we can assume $f_1\times f_2$ is a stable map on the complement $M^*$ of the spines of $f_1$ and $f_2$ in $M_1\cap M_2$.

The Rubinstein-Scharlemann graphic for $f_1$ and $f_2$ is a properly embedded $1$-complex in $[-1,1]\times [-1,1]$ naturally extended from the discriminant set of $(f_1\times f_2)\mid _{M^*}$.
We mean the discriminant set as the image of the singular set $S_{f_1\times f_2}=\{ p\in M^*\mid {\rm rank}(d(f_1\times f_2))_p\leq 1\} $.
The singular set $S_{f_1\times f_2}$ is a $1$-dimensional smooth submanifold in $M^*$ consisting of all the points where a level surface of $f_1$ is tangent to a level surface of $f_2$.
The tangent point is either a ``center" or a ``saddle".
The discriminant set is a smooth immersion of $S_{f_1\times f_2}$ into $(-1,1)\times (-1,1)$ with normal crossings except for finitely many cusps.
We regard the crossings as valence-four vertices and the cusps as valence-two vertices of the graphic.
They are called {\it crossing vertices} and {\it birth-death vertices}, respectively.
On the boundary of $[-1,1]\times [-1,1]$, there are valence-one or valence-two vertices of the graphic.
Each edge is monotonously increasing or decreasing as a graph in $(-1,1)\times (-1,1)$.
See \cite{kobayashi2} or \cite{rubinstein1} for detailed descriptions.

For each $s\in (-1,1)$, the pre-image in $f_1\times f_2$ of the vertical arc $\{ s\} \times [-1,1]$ is the level surface $f^{-1}_1(s)$.
The restriction of $f_2$ to the level surface has critical levels corresponding to the intersections of the vertical arc and the graphic.

\begin{Def}
Sweep-outs $f_1$ and $f_2$ are called {\it generic} if $f_1\times f_2$ is stable on $M^*$ and every vertical or horizontal arc on $[-1,1]\times [-1,1]$ contains at most one vertex of the graphic.
\end{Def}

\section{Labeling the graphics}

We will characterize some relations of the level surfaces of sweep-outs.
It gives a ``labeling" for the complementary regions of the graphic.
This kind of labeling is one of the most useful techniques for reading graphics.

Suppose $M$ is a compact, orientable, connected, smooth $3$-manifold, and $N$ is a $3$-dimensional submanifold of $M$.
Let $(\Sigma ,H^-,H^+)$ and $(T,G^-,G^+)$ be Heegaard splittings for $M$ and $N$, respectively.
Let $f$ and $g$ be sweep-outs representing $(\Sigma ,H^-,H^+)$ and $(T,G^-,G^+)$, respectively.
We will use the notations like $\Sigma _s=f^{-1}(s),H^-_s=f^{-1}([-1,s]),H^+_s=f^{-1}([s,1])$ and $T_t=g^{-1}(t)$.

\begin{Def}
For $s,t\in (-1,1)$, we will say that $T_t$ is {\it mostly above} $\Sigma _s$ if $H^-_s\cap T_t$ is contained in a disk in $T_t$.
Similarly, $T_t$ is {\it mostly below} $\Sigma _s$ if $H^+_s\cap T_t$ is contained in a disk in $T_t$.
\end{Def}

\begin{Def}
For generic sweep-outs $f$ and $g$, we will say that $f$ {\it spans} $g$ if $T_{t_-}$ is mostly below $\Sigma _s$ and $T_{t_+}$ is mostly above $\Sigma _s$ for some values $s,t_-,t_+\in (-1,1)$.
Moreover, we will say that $f$ spans $g$ {\it positively} if $t_-<t_+$, or {\it negatively} if $t_->t_+$.
\end{Def}

\begin{Def}
For generic sweep-outs $f$ and $g$, we will say that $f$ {\it splits} $g$ if there is a value $s\in (-1,1)$ such that for every $t\in (-1,1)$, the level surface $T_t$ is neither mostly above nor below $\Sigma _s$.
\end{Def}

Let $R_a$ be the set of points $(s,t)\in (-1,1)\times (-1,1)$ such that $T_t$ is mostly above $\Sigma _s$.
Similarly, let $R_b$ be the set of points such that $T_t$ is mostly below $\Sigma _s$.
Note that if a point $(s,t)$ is in $R_a$ then its left side $(-1,s]\times \{ t\} $ is contained in $R_a$ because the area $H^-_s\cap T_t$ in the surfaces $T_t$ increase with $s$.
Symmetrically, if $(s,t)\in R_b$ then $[s,1)\times \{ t\} \subset R_b$.
The right side of $R_a$ and the left side of $R_b$ are bounded by edges of the graphic.

Figure \ref{fig1} illustrates the condition that $f$ spans $g$ positively.
In Figure \ref{fig2}, $f$ spans $g$ negatively.
In Figure \ref{fig3}, $f$ spans $g$ positively and negatively.
In Figure \ref{fig4}, $f$ splits $g$.
Note that exactly one of the conditions spanning or splitting happens for any generic pair of sweep-outs.

\begin{figure}[ht]
\begin{tabular}{cc}
\begin{minipage}{3.8cm}
\begin{center}
\caption{}
\vspace{.2cm}
\includegraphics[width=3cm]{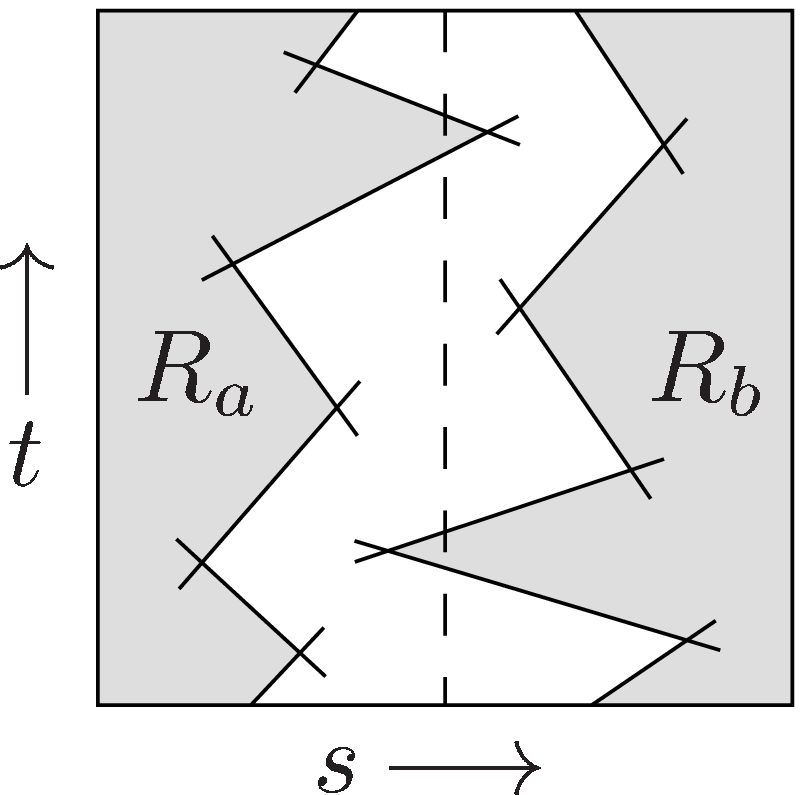}
\label{fig1}
\end{center}
\end{minipage}
\begin{minipage}{3.8cm}
\begin{center}
\caption{}
\vspace{.2cm}
\includegraphics[width=3cm]{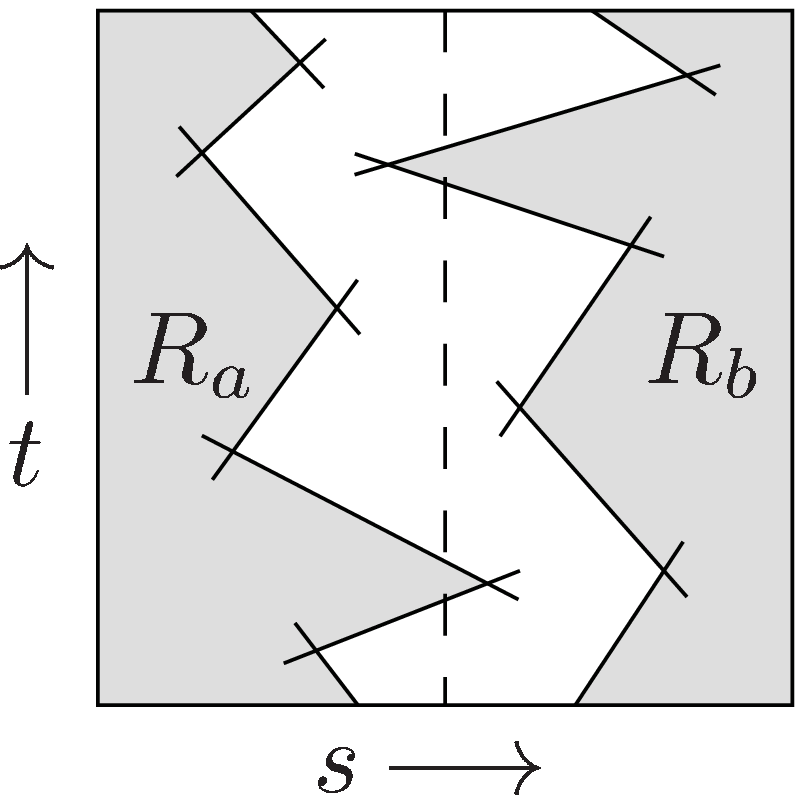}
\label{fig2}
\end{center}
\end{minipage}\\[0.2cm]
\begin{minipage}{3.8cm}
\begin{center}
\caption{}
\vspace{.2cm}
\includegraphics[width=3cm]{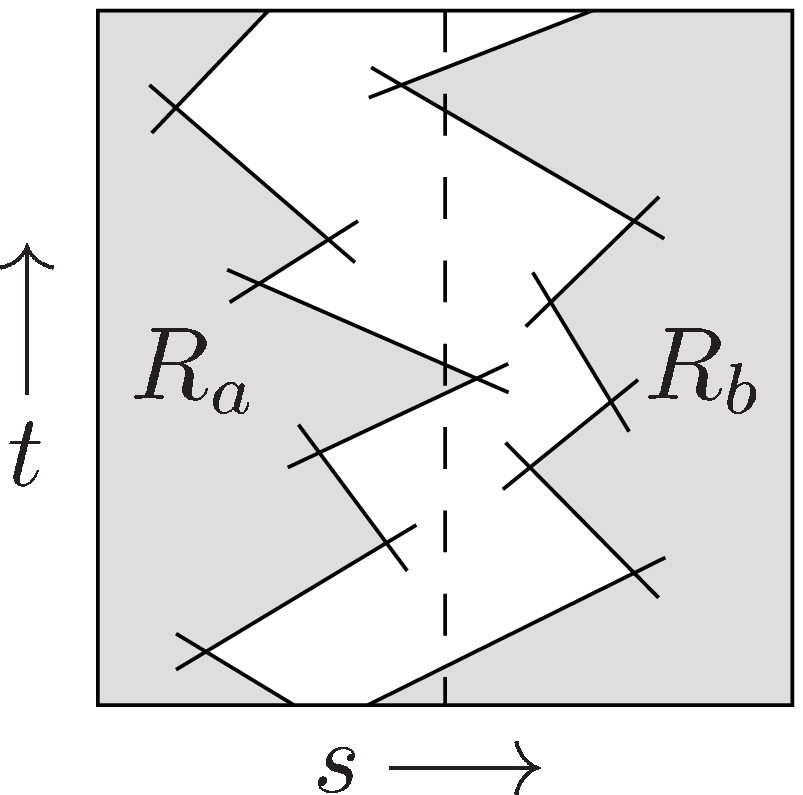}
\label{fig3}
\end{center}
\end{minipage}
\begin{minipage}{3.8cm}
\begin{center}
\caption{}
\vspace{.2cm}
\includegraphics[width=3cm]{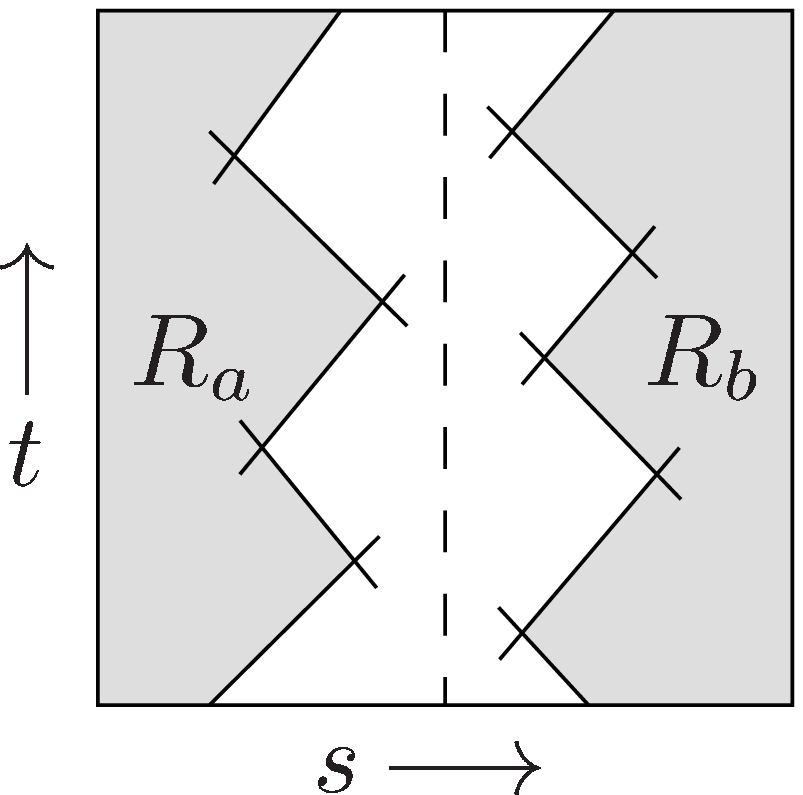}
\label{fig4}
\end{center}
\end{minipage}
\end{tabular}
\end{figure}

\begin{Def}
We will say that $(\Sigma ,H^-,H^+)$ {\it spans} $(T,G^-,G^+)$ positively (negatively) if $(\Sigma ,H^-,H^+)$ and $(T,G^-,G^+)$ are represented by generic sweep-outs $f$ and $g$, respectively, such that $f$ spans $g$ positively (negatively).
We will also say that $(\Sigma ,H^-,H^+)$ {\it splits} $(T,G^-,G^+)$ if $(\Sigma ,H^-,H^+)$ and $(T,G^-,G^+)$ are represented by generic sweep-outs $f$ and $g$ such that $f$ splits $g$.
\end{Def}

Note that if $(\Sigma ,H^-,H^+)$ spans $(T,G^-,G^+)$ positively, $(\Sigma ,H^+,H^-)$ spans\break $(T,G^-,G^+)$ negatively.

\section{Spanning sweep-outs}\label{SpanningS}

The spanning condition gives a bound for the genus of one of the Heegaard splittings.
Suppose $(\Sigma ,H^-,H^+)$ is a Heegaard splitting for a smooth $3$-manifold $M$, and $(T,G^-,G^+)$ is a Heegaard splitting for a $3$-dimensional submanifold $N$ of $M$.
Suppose $f$ and $g$ are generic sweep-outs representing $(\Sigma ,H^-,H^+)$ and $(T,G^-,G^+)$, respectively.
Assume $f$ spans $g$ positively.

\begin{figure}[ht]
\begin{center}
\caption{}
\vspace{.2cm}
\includegraphics[width=5.1cm]{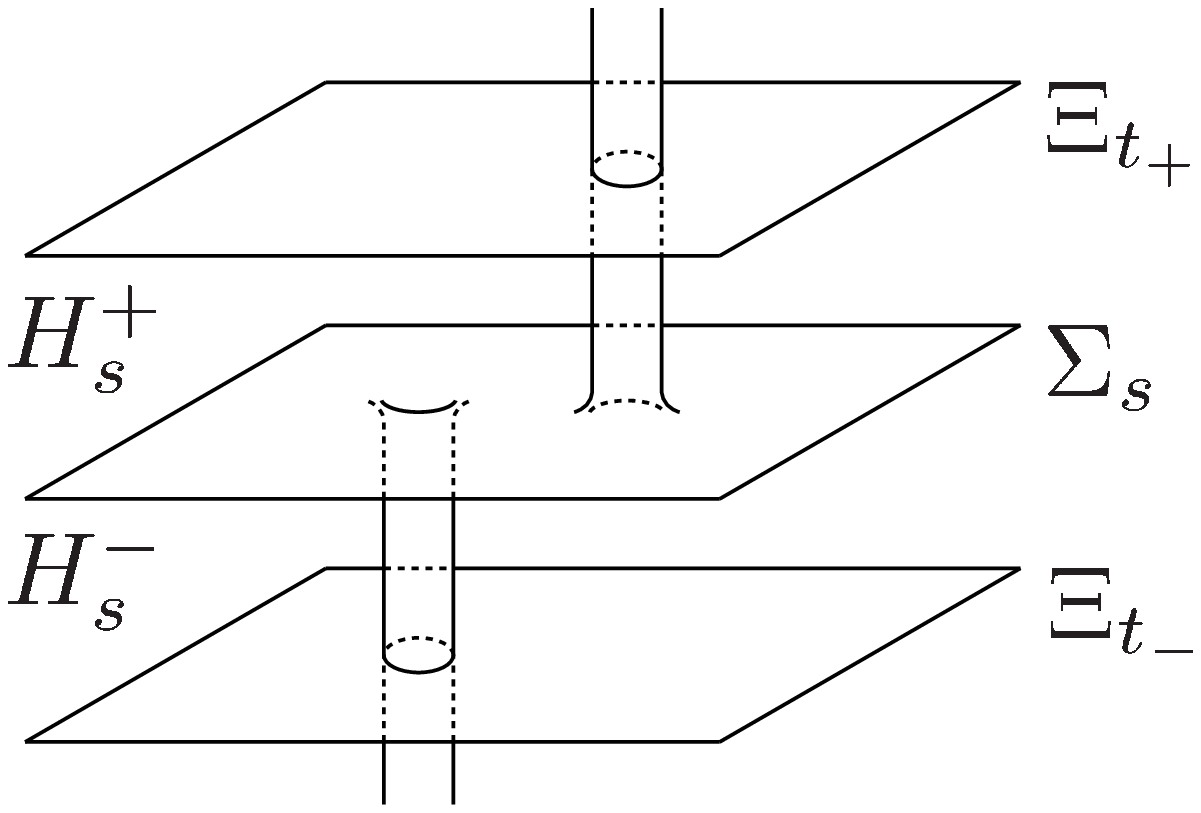}
\label{fig16}
\end{center}
\end{figure}

By the definition, there is a value $-1<s<1$ and values $-1<t_-<t_+<1$ such that $T_{t_-}$ is mostly below $\Sigma _s$ and $T_{t_+}$ is mostly above $\Sigma _s$.
That is to say, $T_{t_-}$ is contained in $H^-_s$ except for some disks while $T_{t_+}$ is contained in $H^+_s$ except for some disks as Figure \ref{fig16}.
In the product manifold $g^{-1}([t_-,t_0])$, the surface $\Sigma _s$ must be ``mostly separating" one boundary component from the other.
The reader can notice that $\Sigma _s\cap g^{-1}([t_-,t_+])$ has genus at least the genus of $T$.
By similar observations, we have the following:

\begin{Lem}\label{spanning}
If $f$ spans $g$ then $\Sigma _s \cap N$ has genus at least the genus of $T$ for some value $s\in (-1,1)$.
If $f$ spans $g$ positively and negatively then $\Sigma _s \cap N$ has genus at least twice the genus of $T$ for some value $s\in (-1,1)$.
\end{Lem}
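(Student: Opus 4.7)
My plan is to prove the lemma in two steps: first the single-spanning case via a product-region argument inside $N$, then the double-spanning case by locating a single value of $s$ at which both a positive and a negative configuration can be read off simultaneously.

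For the single-spanning case, assume positive spanning with witnesses $s$, $t_-<t_+$ (the negative case is symmetric). The subregion $N':=g^{-1}([t_-,t_+])$ is a product $T\times[t_-,t_+]$, and $F:=\Sigma_s\cap N'$ is a properly embedded surface whose boundary circles lie in disks $D_-\subset T_{t_-}$ and $D_+\subset T_{t_+}$ furnished by the ``mostly below'' and ``mostly above'' hypotheses, respectively. I would cap these boundary circles off by disks in $D_\pm$ pushed slightly into $N'$, obtaining a closed surface $\widehat F\subset N'$. Because $F$ separates $H^-_s\cap N'$ from $H^+_s\cap N'$, the closed surface $\widehat F$ separates a neighborhood of $T_{t_-}\setminus D_-$ from a neighborhood of $T_{t_+}\setminus D_+$ inside $T\times I$; therefore it represents the nontrivial class of $H_2(T\times I,\partial;\mathbb{Z}/2)$, and at least one of its components has genus at least $g(T)$. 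Since capping along boundary circles by disks does not increase genus, $F$ itself, and hence $\Sigma_s\cap N$, has genus at least $g(T)$.

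For the double-spanning case, let $(s_1,t^1_-,t^1_+)$ and $(s_2,t^2_-,t^2_+)$ witness positive and negative spanning respectively, and assume $s_1\leq s_2$ after possibly swapping. The monotonicity recalled in Section 4 says that ``mostly below'' is preserved as $s$ increases and ``mostly above'' as $s$ decreases, so $t^1_-$ remains mostly below $\Sigma_{s_2}$ and $t^2_+$ remains mostly above $\Sigma_{s_1}$. I split on whether $t^1_-<t^2_+$ or $t^1_->t^2_+$, ruling out equality by perturbing the witnesses. If $t^1_-<t^2_+$, I work at $s=s_2$: the three levels $t^1_-<t^2_+<t^2_-$ are labeled below--above--below, yielding a positive-spanning sub-interval $[t^1_-,t^2_+]$ and a negative-spanning sub-interval $[t^2_+,t^2_-]$. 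If instead $t^1_->t^2_+$, I work at $s=s_1$: the three levels $t^2_+<t^1_-<t^1_+$ are labeled above--below--above, yielding a negative-spanning sub-interval $[t^2_+,t^1_-]$ and a positive-spanning sub-interval $[t^1_-,t^1_+]$. In either case, slightly perturbing the shared endpoint separates the two intervals, and applying the first part of the lemma to each produces two disjoint subsurfaces of $\Sigma_s\cap N$ each of genus at least $g(T)$, whence $\Sigma_s\cap N$ has genus at least $2g(T)$.

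The main obstacle I expect is the cap-off homological step in the first part: one must check that the cap disks can be chosen inside $D_\pm$ and pushed entirely to one side of $F$ so that $\widehat F$ genuinely separates the two bulk regions $T_{t_\pm}\setminus D_\pm$, and then conclude that this forces some component of $\widehat F$ to carry the fundamental class of $T$ in $T\times I$. The monotonicity used for the second part is a short check directly from the definitions of $R_a$ and $R_b$.
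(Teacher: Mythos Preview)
Your argument is correct and is precisely the kind of filling-in the paper invites: its own proof is the single phrase ``by similar observations'' following an informal remark that $\Sigma_s$ must ``mostly separate'' the two ends of the product $g^{-1}([t_-,t_+])$.

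One small technical point in part~1: if the boundary circles of $F$ in $D_\pm$ are nested, you cannot cap them by pairwise disjoint embedded disks inside $D_\pm$, so $\widehat F\subset N'$ need not be embedded as written. This does not matter for the argument. Cap $\partial F$ abstractly and extend the projection $\pi\colon N'\to T$ over each cap by sending it to the unique disk in $T$ bounded by its image (here $g(T)\ge 1$ is used; the case $g(T)=0$ is trivial). A vertical arc $\{p\}\times[t_-,t_+]$ with $p\notin D_-\cup D_+$ starts in $H^-_s$ and ends in $H^+_s$, so it meets $F$ an odd number of times and misses all caps; hence the map $\widehat F\to T$ has odd degree, and some component of $\widehat F$ has genus at least $g(T)$. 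Since $\widehat F$ is the abstract cap-off of $F$, this is exactly $g(F)$, and monotonicity of genus under subsurface inclusion passes the bound from $F=\Sigma_s\cap N'$ to $\Sigma_s\cap N$.

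Your reduction in part~2 to a single value of $s$, using that $R_b$ is closed to the right and $R_a$ to the left, is a genuine step the paper does not spell out. The two cases $t^1_-\lessgtr t^2_+$ are handled correctly; for the perturbation of the shared endpoint you are implicitly using that $R_a$ is open off the graphic, so both $(s,t^2_+\pm\varepsilon)$ (respectively $(s,t^1_-\pm\varepsilon)$) remain labelled ``above'' (respectively ``below''). The two disjoint product slabs then contribute additively to the genus of $\Sigma_s\cap N$, giving at least $2g(T)$.
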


Recall that we allow $3$-manifolds to have sphere boundaries.
Still, next four lemmas can be proved identically as those in brackets.

\begin{Lem}\cite[Lemma 9]{johnson1}\label{1-9}
Every Heegaard splitting spans itself positively.
\end{Lem}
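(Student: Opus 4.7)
The plan is to take two generic sweep-outs that both represent $(\Sigma ,H^-,H^+)$ and to exploit the fact that a sweep-out collapses onto its spines at the two ends. Concretely, I would first choose a generic sweep-out $f$ representing the given splitting, and then take $g$ to be a second generic sweep-out representing the same splitting, obtained as a small generic perturbation of $f$. By the Kobayashi--Saeki genericity result cited in Section \ref{Sweep-outs}, one can arrange that $f\times g$ is stable off the spines so that $f$ and $g$ form a generic pair; since the perturbation used is arbitrarily small, $g$ still represents $(\Sigma ,H^-,H^+)$.

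Next, using the isotopies built into the definition of ``representing'', I would assume $f^{-1}(0)=g^{-1}(0)=\Sigma $, $f^{-1}([-1,0])=g^{-1}([-1,0])=H^-$, and $f^{-1}([0,1])=g^{-1}([0,1])=H^+$. For $t$ sufficiently close to $-1$, the level surface $T_t=g^{-1}(t)$ lies in an arbitrarily small neighborhood of the negative spine of $g$, and hence is entirely contained in $H^-=H^-_0$. Then $H^+_0\cap T_t=\emptyset $, which is vacuously contained in a disk in $T_t$, so $T_t$ is mostly below $\Sigma _0$. Symmetrically, for $t$ close to $+1$, $T_t\subset H^+=H^+_0$, so $H^-_0\cap T_t=\emptyset $ and $T_t$ is mostly above $\Sigma _0$.

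Setting $s=0$ and choosing $t_-$ near $-1$ and $t_+$ near $+1$ yields $t_-<t_+$ together with $T_{t_-}$ mostly below $\Sigma _s$ and $T_{t_+}$ mostly above $\Sigma _s$. By definition this says $f$ spans $g$ positively, so $(\Sigma ,H^-,H^+)$ spans itself positively.

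The only step that needs any care is checking that $f$ and $g$ can simultaneously be made generic and be made to represent the same Heegaard splitting. This is not really an obstacle: an arbitrarily small isotopy does not change which Heegaard splitting a sweep-out represents, so the Kobayashi--Saeki genericity argument applies without disturbing the representation property. Everything else reduces to the tautology that a level surface lying in a small neighborhood of the negative (respectively positive) spine of a sweep-out representing $(\Sigma ,H^-,H^+)$ is contained in $H^-$ (respectively $H^+$), and the ``mostly above/below'' condition then holds trivially with empty intersection.
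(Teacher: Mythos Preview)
The paper gives no proof of its own here; it simply cites \cite[Lemma~9]{johnson1} and remarks that the argument there goes through verbatim when sphere boundaries are allowed. Your approach is exactly the natural one (and is essentially Johnson's): take $g$ to be a small generic perturbation of $f$, so that for $t$ near $\pm 1$ the level surface $T_t$ lies entirely on one side of $\Sigma _0=f^{-1}(0)$, making ``mostly above/below'' hold trivially.

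One minor quibble: after perturbing to achieve genericity you cannot simultaneously have $f^{-1}(0)=g^{-1}(0)=\Sigma$, and independently isotoping $f$ and $g$ to force this would destroy the genericity you just arranged. But you do not need this equality. It is enough that the negative spine $g^{-1}(-1)$ lies in the interior of $H^-_0=f^{-1}([-1,0])$ (and symmetrically for the positive spine), which follows immediately from the smallness of the perturbation; then for $t$ close enough to $-1$ one has $T_t\subset H^-_0$ and the rest of your argument goes through unchanged.
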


\begin{Lem}\cite[Lemma 12]{johnson2}\label{2-12}
If $(\Sigma ,H^-,H^+)$ spans $(T,G^-,G^+)$ positively (negatively) then every stabilization of $(\Sigma ,H^-,H^+)$ spans $(T,G^-,G^+)$ positively (negatively).
\end{Lem}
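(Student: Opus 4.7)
The plan is to reduce to a single elementary stabilization (tubing along one trivial arc $\alpha \subset H^+$) and show that this operation preserves positive spanning; the general case then follows by iteration. By hypothesis there are generic sweep-outs $f$ and $g$ representing $(\Sigma, H^-, H^+)$ and $(T, G^-, G^+)$, respectively, together with values $s, t_-, t_+ \in (-1,1)$ with $t_- < t_+$ such that $T_{t_-}$ is mostly below $\Sigma_s$ and $T_{t_+}$ is mostly above $\Sigma_s$. I will keep $g$ fixed and engineer a suitable sweep-out $f'$ for the stabilized splitting.

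First I would localize the stabilization. The surfaces $T_{t_-}$ and $T_{t_+}$ meet $\Sigma_s$ in a $1$-complex, so there is a point $p \in \Sigma_s \setminus (T_{t_-} \cup T_{t_+})$ and a small open ball $B$ around $p$ with $B \cap (T_{t_-} \cup T_{t_+}) = \emptyset$. Because the elementary stabilization of a Heegaard splitting is determined up to isotopy, we may choose the stabilizing trivial arc $\alpha$ to lie inside $B \cap H^+$, parallel to $\Sigma_s \cap B$ via a disk contained in $B$.

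Next I would build $f'$ by setting $f' \equiv f$ on $M \setminus B$ and replacing $f|_B$ with a standard local Morse model that introduces the cancelling pair of critical points implementing the $1$-handle attached along $\alpha$. Then $f'^{-1}(0)$ becomes $\Sigma$ surgered along $\alpha$, and the spines $f'^{-1}(\pm 1)$ agree with those of $f$ off $B$, with the required extra arcs added inside. Since $T_{t_\pm} \subset M \setminus B$ and $f' = f$ there, the intersections $T_{t_\pm} \cap H^{\prime\pm}_s$ coincide pointwise with $T_{t_\pm} \cap H^{\pm}_s$; hence $T_{t_-}$ is still mostly below $\Sigma'_s$ and $T_{t_+}$ is still mostly above $\Sigma'_s$, so $f'$ spans $g$ positively at the very same triple $(s, t_-, t_+)$. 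If $(f', g)$ fails to be generic, a sufficiently $C^\infty$-small perturbation of $f'$ remedies this, and because the two spanning conditions correspond to points lying in the open regions $R_a, R_b$ of the graphic, positive spanning persists under the perturbation.

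The step requiring real care is verifying that the local Morse modification inside $B$ genuinely yields a sweep-out of the stabilized splitting, with the correct compression-body structure on each side of $\Sigma'$; concretely, the local model inside $B$ must realize exactly the $1$-handle attachment to $H^-$ along $\alpha$ that defines the stabilization, while leaving the spine structure outside $B$ untouched. Granting that standard Morse-theoretic check, the rest of the argument is a direct transfer of ``mostly above'' and ``mostly below'' from $f$ to $f'$, using only that $f = f'$ off a ball disjoint from the two decisive level surfaces of $g$.
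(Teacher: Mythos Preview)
The paper itself does not prove this lemma; it cites \cite[Lemma 12]{johnson2} and remarks that Johnson's argument carries over unchanged when sphere boundary components are allowed. So there is no in-paper proof to compare against, and your attempt must stand on its own.

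Your strategy---localize the stabilization away from $T_{t_\pm}$ and then transfer the ``mostly above/below'' conditions---is the right one, and the closing perturbation step using openness of $R_a,R_b$ is fine. The genuine gap is the construction of $f'$. A sweep-out has \emph{no} interior critical points: every level $f'^{-1}(t)$ with $t\in(-1,1)$ must be a closed surface parallel to the stabilized Heegaard surface, hence of genus one higher than $\Sigma$. If you modify $f$ only inside a small ball $B$ about a point of $\Sigma_s$, then $f(B)$ is a short subinterval of $(-1,1)$ about $s$; for any $t$ outside that interval you have $f'^{-1}(t)=f^{-1}(t)$, which still has the old genus. Thus your $f'$ cannot be a sweep-out for the stabilized splitting, and the sentence about the spines acquiring ``the required extra arcs inside $B$'' is impossible for the same reason: the spines live at levels $\pm1$, which $B$ never reaches. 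You correctly flag this construction as the delicate step, but the ``standard Morse-theoretic check'' you defer would in fact fail for a small ball.

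The repair is to replace $B$ by a thin tubular neighbourhood of a \emph{vertical} arc $\gamma$ running from $\Theta^-$ to $\Theta^+$ (a section of the product structure on the complement of the spines). Inside $N(\gamma)$ one replaces, coherently in $t$, each disk $\Sigma_t\cap N(\gamma)$ by a once-punctured torus; this produces an honest sweep-out of the stabilization whose spines are those of $f$ together with one extra edge each. Since $\gamma$ is one-dimensional and $T_{t_-},T_{t_+}$ are surfaces, general position lets you choose $\gamma$, and hence $N(\gamma)$, disjoint from them. With that correction your transfer of ``mostly above/below'' from $f$ to $f'$ and the genericity perturbation go through exactly as you wrote.
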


\begin{Lem}\cite[Lemma 14]{johnson2}\label{2-14}
Suppose $(\Sigma _1,H^-_1,H^+_1)$ and $(\Sigma _2,H^-_2,H^+_2)$ are Heegaard splittings for compact, smooth $3$-manifolds $M_1$ and $M_2$, respectively.
Let $(\Sigma ,H^-,H^+)$ be the amalgamation of $(\Sigma _1,H^-_1,H^+_1)$ and $(\Sigma _2,H^-_2,H^+_2)$.
Suppose $(T,G^-,G^+)$ is a Heegaard splitting for a $3$-dimensional submanifold $N$ of $M_1$.
If $(\Sigma _1,H^-_1,H^+_1)$ spans $(T,G^-,G^+)$ positively (negatively) then $(\Sigma ,H^-,H^+)$ spans $(T,G^-,G^+)$ positively (negatively).
\end{Lem}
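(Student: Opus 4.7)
The plan is to lift the spanning pair $(f_1,g)$ on $(M_1,N)$ to a spanning pair $(f,g)$ on $(M,N)$, with $f$ representing the amalgamation, by arranging $f$ to agree with $f_1$ outside a thin collar of $\partial M_1$. The key observation is that the relevant level surfaces of $g$ can be pushed well inside the interior of $M_1$, where the amalgamation construction leaves the sweep-out essentially untouched.

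First, take generic sweep-outs $f_1$ of $M_1$ and $g$ of $N$ representing $(\Sigma _1,H^-_1,H^+_1)$ and $(T,G^-,G^+)$ with $f_1$ spanning $g$ positively, witnessed by values $s^*,t_-<t_+$ such that $T_{t_-}$ is mostly below $\Sigma _{1,s^*}$ and $T_{t_+}$ is mostly above $\Sigma _{1,s^*}$. Since $N$ is a $3$-dimensional submanifold of $M_1$, a small ambient isotopy of $g$ arranges that $T_{t_-}\cup T_{t_+}$ lies in the interior of $M_1$, disjoint from a product collar $C\cong \partial M_1\times [0,1)$ of $\partial M_1=\partial _-H^+_1$ that is contained in $H^+_1$.

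Next, construct the amalgamated sweep-out $f:M\to [-1,1]$ so that $f|_{M_1\setminus C}=f_1|_{M_1\setminus C}$. Pick a decomposition of $H^+_1$ into the product $\partial M_1\times [0,1]$ and $1$-handles, all lying inside $C$; similarly choose a collar $C_2$ of $\partial M_2$ in $M_2$ containing all $1$-handles of $H^+_2$, together with a generic sweep-out $f_2$ of $M_2$. With these choices, the spine of $H^-$ is contained in $(M_1\setminus C)\cup C_2$ and restricts on $M_1\setminus C$ to precisely the spine $\Theta ^-_1$ of $f_1$, while the spine of $H^+$ is contained in $(M_2\setminus C_2)\cup C$ and is disjoint from $M_1\setminus C$. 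Hence $f$ can be built to coincide with $f_1$ on $M_1\setminus C$ and extend across $C\cup C_2$ and $M_2\setminus C_2$ in the manner dictated by $f_2$ and the amalgamated spine structure; a routine check confirms that such $f$ represents $(\Sigma ,H^-,H^+)$.

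With this $f$, at the value $s=s^*$ we have $H^\pm _s\cap (M_1\setminus C)=H^\pm _{1,s^*}\cap (M_1\setminus C)$. Because $T_{t_\pm }\subset M_1\setminus C$, the intersections $T_{t_\pm }\cap H^\mp _s$ equal $T_{t_\pm }\cap H^\mp _{1,s^*}$, each contained in a disk of $T_{t_\pm }$ by the spanning hypothesis on $(f_1,g)$. Therefore $T_{t_-}$ is mostly below $\Sigma _s$ and $T_{t_+}$ is mostly above $\Sigma _s$, so $f$ spans $g$ positively at $s,t_-,t_+$; a final $C^\infty $-small perturbation restores genericity without destroying these witnesses. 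The main obstacle is in the middle step: carrying out the amalgamation construction with sufficient control that $f$ literally matches $f_1$ on $M_1\setminus C$. Once this is achieved the spanning condition transfers tautologically, and the negative case is handled identically by swapping the roles of $t_-$ and $t_+$.
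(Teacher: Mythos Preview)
The paper gives no proof of its own here; it simply cites \cite[Lemma~14]{johnson2} and remarks that Johnson's argument goes through verbatim even when sphere boundary components are allowed. Your plan---build a sweep-out $f$ for the amalgamation that coincides with $f_1$ on most of $M_1$, so that the witnesses $(s^*,t_-,t_+)$ for positive spanning transfer unchanged---is exactly Johnson's idea.

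There is, however, a genuine gap in the middle step, and it is not merely a matter of imprecision. With $C$ taken as you describe (a product collar of $\partial_-H^+_1$ contained in $H^+_1$), the identity $f|_{M_1\setminus C}=f_1|_{M_1\setminus C}$ is impossible. Since $C\subset H^+_1$ we have $H^-_1\subset M_1\setminus C$, so for any $s\leq 0$ the level set $f_1^{-1}(s)=\Sigma_{1,s}$ is a \emph{closed} surface lying entirely inside $M_1\setminus C$. Your identity would force $f^{-1}(s)\cap(M_1\setminus C)=\Sigma_{1,s}$; but then the connected level surface $f^{-1}(s)$ of the sweep-out would contain the closed surface $\Sigma_{1,s}$ as a component, hence equal it, contradicting $\operatorname{genus}(\Sigma)>\operatorname{genus}(\Sigma_1)$. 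For the same reason your description of the spines cannot hold: the spine of $H^-$ must reach $M_2$ from $\Theta^-_1$, and the only way to do so is through $H^+_1$, crossing any collar of $\partial_-H^+_1$.

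What actually happens in Johnson's construction is that $f^{-1}(s)\cap M_1$ equals $\Sigma_{1,s}$ \emph{together with thin tubes} running through $H^+_1$ toward $\partial_-H^+_1$ (dual to the $1$-handles of $H^+_2$); these tubes are precisely how the level surface connects to $M_2$ and picks up its extra genus. The correct argument makes the tubes thin and positions their cores so that their intersections with $T_{t_\pm}$ either vanish or lie inside the same disk of $T_{t_\pm}$ that already contains $T_{t_\pm}\cap H^\mp_{1,s^*}$. Once this is arranged, the ``mostly above/below'' conditions transfer exactly as in your final paragraph. So your outline is right and matches the cited proof, but the obstacle you flag at the end is real and is not resolved by the collar construction as written.
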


\begin{Lem}\cite[Lemma 16]{johnson2}\label{2-16}
Suppose $M$ is a smooth $3$-manifold with a single boundary component and $(\Sigma ,H^-,H^+)$ is a Heegaard splitting for $M$ such that $\partial _-H^+=\partial M$.
Suppose $(T,G^-,G^+)$ is a Heegaard splitting for a $3$-dimensional submanifold $N$ of $M$.
Let $(\Sigma ',H^{\prime -},H^{\prime +})$ be the boundary stabilization of $(\Sigma ,H^-,H^+)$.
If $(\Sigma ,H^-,H^+)$ spans $(T,G^-,G^+)$ positively (negatively) then $(\Sigma ',H^{\prime -},H^{\prime +})$ spans $(T,G^-,G^+)$ negatively (positively).
\end{Lem}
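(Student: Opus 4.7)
Take generic sweep-outs $f$ and $g$ representing $(\Sigma,H^-,H^+)$ and $(T,G^-,G^+)$, respectively, that witness $f$ spanning $g$ positively, with values $s_0$ and $t_-<t_+$ such that $T_{t_-}$ is mostly below $\Sigma _{s_0}$ and $T_{t_+}$ is mostly above $\Sigma _{s_0}$. The plan is to build a generic sweep-out $f'$ representing $(\Sigma ',H^{\prime -},H^{\prime +})$ that agrees with $-f$ outside a thin neighborhood of $\partial M\cup \alpha $, and then check that the triple $(-s_0,t_-,t_+)$ witnesses $f'$ spanning $g$ negatively.

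Recall that $H^{\prime +}$ is obtained from $H^-$ by attaching a collar $C$ of $\partial M$ together with a tubular neighborhood $N(\alpha )$ of the vertical arc $\alpha \subset H^+$, while $H^{\prime -}$ is the complementary handlebody. Set $U=C\cup N(\alpha )\subset H^+$ and, by general position, arrange that $T_{t_\pm }\cap U$ lies in a disk on $T_{t_\pm }$ disjoint from the disk provided by the spanning hypothesis (for example, push $\alpha $ transverse to the finitely many critical level surfaces of $g$ and shrink $C$). Define $f':M\rightarrow [-1,1]$ by $f'=-f$ on $M\setminus U$, and extend smoothly across $U$ so that $f'\geq 0$ on $U$, $(f')^{-1}(1)\cap U=\partial M\cup \alpha $, and $(f')^{-1}(0)\cap U=\Sigma '\cap U$. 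Then $(f')^{-1}(1)=\Theta ^-\cup \alpha \cup \partial M$ is a spine of $H^{\prime +}$, $(f')^{-1}(-1)$ is a spine of the handlebody $H^{\prime -}$, and $(f')^{-1}(0)=\Sigma '$, so $f'$ represents the boundary stabilization.

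Setting $s'=-s_0$ (assume $s_0>0$ without loss of generality), on $M\setminus U$ the sign flip gives $H^{\prime -}_{s'}=H^+_{s_0}\setminus U$ and $H^{\prime +}_{s'}=(H^-_{s_0}\setminus U)\cup U$. Thus $T_{t_-}\cap H^{\prime -}_{s'}\subset T_{t_-}\cap H^+_{s_0}$ lies in a disk, so $T_{t_-}$ is mostly above $\Sigma '_{s'}$; and $T_{t_+}\cap H^{\prime +}_{s'}=(T_{t_+}\cap H^-_{s_0})\cup (T_{t_+}\cap U)$ is a union of two disks, which lie inside a common disk in $T_{t_+}$ (any two disjoint disks in a connected surface lie in a common disk, via a regular neighborhood of their union with a joining arc), so $T_{t_+}$ is mostly below $\Sigma '_{s'}$. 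Because $t_-<t_+$, the ``mostly below'' value now exceeds the ``mostly above'' value, so $f'$ spans $g$ negatively; the negative-to-positive direction is symmetric.

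The main technical hurdle is the general-position work in choosing $U$: we need $T_{t_\pm }\cap U$ to form at most a single disk on $T_{t_\pm }$ disjoint from the disks provided by the spanning hypothesis. This rests on choosing $\alpha $ in general position with respect to $g$---possible because $\alpha $ has two free parameters (its foot on $\partial M$ and its vertical isotopy class) against a finite singular locus of $g$---together with the absorptive property of disks on surfaces just invoked.
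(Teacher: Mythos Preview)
The paper does not give its own proof of this lemma; it simply cites \cite[Lemma 16]{johnson2} and remarks that the argument carries over verbatim to manifolds with sphere boundary. So there is no in-paper proof to compare against, and your task is to produce a self-contained argument. Your overall strategy is the right one: the boundary stabilization essentially swaps the roles of $H^-$ and $H^+$ up to a thin set $U$, so a witness for positive spanning at level $s_0$ should become a witness for negative spanning for the new splitting. However, your explicit construction of $f'$ is inconsistent. Since $U\subset H^+$ and $\alpha$ runs from $\Sigma$ to $\partial M$, on $\partial U\cap \mathrm{int}(H^+)$ one has $f>0$, hence $-f<0$; you therefore cannot have $f'=-f$ on $M\setminus U$ (and in particular on $\partial U$) while also demanding $f'\geq 0$ on $U$. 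With your prescription the sets $f'^{-1}(s)$ for $s<0$ are $\Sigma_{-s}\setminus U$, surfaces with boundary, so $f'$ is not a sweep-out. Relatedly, $(f')^{-1}(-1)=(\Theta^+\cup\partial M)\setminus U$ is not a spine of $H'^-$: the graph $\Theta^+$ is attached to $\partial M$, so cutting by the collar $C$ leaves arcs with endpoints on $\partial U\subset\Sigma'$, and when $\partial M$ has positive genus the first Betti numbers do not even match.

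The repair is to abandon the explicit formula and instead use the freedom in the definition of ``represents.'' Apply the boundary-stabilization construction to the splitting $(\Sigma_{s_0},H^-_{s_0},H^+_{s_0})$, so that by definition $H'^-=H^+_{s_0}\setminus U_0$ and $H'^+=H^-_{s_0}\cup U_0$ for a thin tube-plus-collar $U_0\subset H^+_{s_0}$; this is isotopic to the boundary stabilization of $(\Sigma,H^-,H^+)$. Now take \emph{any} sweep-out $f'$ representing $(\Sigma',H'^-,H'^+)$ and isotope it so that $f'^{-1}(0)=\Sigma'$. Then at $s'=0$ your third paragraph goes through verbatim: $T_{t_-}\cap H'^-_0\subset T_{t_-}\cap H^+_{s_0}$ lies in a disk, and $T_{t_+}\cap H'^+_0=(T_{t_+}\cap H^-_{s_0})\cup(T_{t_+}\cap U_0)$ is a disk together with finitely many tiny disks coming from $N(\alpha)$ (shrink the collar so it misses the compact surfaces $T_{t_\pm}$), hence lies in a common disk. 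A final small perturbation makes $f',g$ generic, and since ``mostly above/below'' are open conditions the witness survives.
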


\section{Splitting sweep-outs}\label{SplittingS}

The {\it curve complex} $C(T)$ of a closed, orientable, connected surface $T$ is a simplicial complex defined as follows:
The vertices of $C(T)$ are isotopy classes of essential loops in $T$.
Distinct $n$ vertices span a $(n-1)$-simplex of $C(T)$ if and only if they are represented by pairwise disjoint loops in $T$.
There is a canonical distance $d$ among the vertices.
We mean that $d(v_1,v_2)$ is the number of edges on the shortest path between two vertices $v_1$ and $v_2$ in the $1$-skeleton of $C(T)$. 

Suppose $(T,G^-,G^+)$ is a Heegaard splitting.
When $D^-$ and $D^+$ are essential disks in $G^-$ and $G^+$, respectively, $\partial D^-$ and $\partial D^+$ can be regarded as vertices of $C(T)$.
Hempel \cite{hempel} defined the {\it distance} of $(T,G^-,G^+)$, denoted by $d(T)$, as the minimum of $d(\partial D^-$, $\partial D^+)$ over all pairs of essential disks $D^-\subset G^-,D^+\subset G^+$.
It is a numerical invariant indicating the irreducibility of Heegaard splittings (see \cite{hempel}).

The goal in this section is to estimate the genus of $(\Sigma ,H^-,H^+)$ by $d(T)$ when a Heegaard splitting $(\Sigma ,H^-,H^+)$ splits another Heegaard splitting $(T,G^-,G^+)$.
We will almost trace the way of \cite[Section 6]{johnson2} but modify it slightly to avoid arguments with the irreducibility of the manifolds.

Suppose $M_1$ and $M_2$ are irreducible, closed, smooth $3$-manifolds other than $S^3$.
Let $M^*_i$ be the $3$-manifold obtained by removing an open ball from $M_i$ for each $i=1,2$.
Let $M$ be the union of $M^*_1$ and $M^*_2$ glued at their boundaries, namely, the connected sum of $M_1$ and $M_2$.
Take either $M^*_1$ or $M^*_2$, and rewrite it as $N$.
Suppose $(\Sigma ,H^-,H^+)$ is a Heegaard splitting of genus $k$ for $M$, and $(T,G^-,G^+)$ is a Heegaard splitting of genus at least $2$ with distance at least $2$ for $N$.
Assume $(\Sigma ,H^-,H^+)$ splits $(T,G^-,G^+)$.
By definition, there are generic sweep-outs $f$ and $g$ representing $(\Sigma ,H^-,H^+)$ and $(T,G^-,G^+)$, respectively such that $f$ splits $g$.

\begin{Lem}\label{2-21}
There exists a value $s_0\in (-1,1)$ such that:
\begin{enumerate}
\item There are no vertices of the graphic on the vertical arc $\{ s_0\} \times [-1,1]$.
\item $\Sigma _{s_0}\cap T_t$ contains an essential loop in $T_t$ for each regular value $t$ for $g\mid _{\Sigma _{s_0}}$.
\end{enumerate}
\end{Lem}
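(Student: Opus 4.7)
My plan is to first use the splitting hypothesis to find $s_0$ securing property (1), and then verify property (2) for this $s_0$ by contradiction, exploiting the distance hypothesis $d(T)\geq 2$. For (1), the observations in Section 4 show that $R_a$ and $R_b$ are closed subsets of $(-1,1)\times(-1,1)$ with $R_a$ monotone leftward in $s$ and $R_b$ monotone rightward, so their projections to the $s$-axis are of the form $(-1,s^*]$ and $[s^{**},1)$ respectively. The set of ``splitting levels'' $U=(s^*,s^{**})$ is then an open interval in $(-1,1)$, nonempty by hypothesis, and I can choose $s_0\in U$ whose vertical arc avoids the finitely many $s$-coordinates of vertices of the graphic. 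This gives (1).

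For (2), I argue by contradiction: suppose that at some regular value $t_0$ of $g|_{\Sigma_{s_0}}$ every component of $\Sigma_{s_0}\cap T_{t_0}$ is inessential in $T_{t_0}$. Cutting $T_{t_0}$ along these curves yields one component $R$ of genus equal to that of $T$ together with disk components; without loss of generality $R\subset H^+_{s_0}$. Since $T_{t_0}$ is not mostly above $\Sigma_{s_0}$, the set $H^-_{s_0}\cap T_{t_0}$ (a disjoint union of disks in $T_{t_0}$) cannot lie in a single disk of $T_{t_0}$, which forces $R$ to have at least two boundary components; the $H^-_{s_0}$-disks are thus separated on $T_{t_0}$ by the positive-genus surface $R$.

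From this configuration I plan to construct essential compressing disks $E^\pm \subset G^\pm$ for $T$ with $\partial E^-$ and $\partial E^+$ disjoint on $T$, contradicting $d(T)\geq 2$. Working in the two sub-compression-bodies of $N$ cut off by $T_{t_0}$, one caps the surfaces $\Sigma_{s_0}\cap g^{-1}([t_0,1])$ and $\Sigma_{s_0}\cap g^{-1}([-1,t_0])$ by pushing the disks of $T_{t_0}\setminus R$ slightly off $T_{t_0}$, and then compresses the resulting closed surfaces within the respective compression bodies. The genus $\geq 2$ hypothesis on $T$ keeps these compressions from terminating trivially, so essential compressing disks for $T$ emerge on both sides; with care in choosing which disks of $T_{t_0}\setminus R$ are used as caps on each side, the boundaries of $E^+$ and $E^-$ can be placed in disjoint regions of $T_{t_0}$, giving the required disjointness in $T$.

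The main obstacle is the detailed compression argument producing essential disks with disjoint boundaries. The approach follows Johnson's in \cite[Section 6]{johnson2} for the analogous torus-boundary setting; the present adaptation to sphere boundaries replaces Johnson's use of the incompressibility of torus boundaries by the fact that $\partial N$ does not bound a ball in $N$ (thanks to the irreducibility of $M_1$ and $M_2$), so that compressions encountering $\partial N$ produce ball components that can be discarded without losing essential disks.
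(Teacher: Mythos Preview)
Your plan for (2) rests on a misunderstanding and is therefore both unnecessary and unsound. You choose $s_0$ to be a splitting level and then, assuming every loop of $\Sigma_{s_0}\cap T_{t_0}$ is inessential in $T_{t_0}$, you let $R\subset H^+_{s_0}$ be the unique positive-genus complementary region and invoke the splitting property to conclude that $H^-_{s_0}\cap T_{t_0}$ is \emph{not} contained in a single disk of $T_{t_0}$. But under your assumption it always \emph{is}: the complement $T_{t_0}\setminus R$ is a union of disjoint disks $\Delta_1,\ldots,\Delta_m$, and since $R$ has positive genus a regular neighborhood in $R$ of $\partial R$ together with a spanning tree of arcs has a single interior boundary curve bounding a disk $D\subset T_{t_0}$ with $\bigcup_i\Delta_i\subset D$; hence $H^-_{s_0}\cap T_{t_0}\subset D$ and $T_{t_0}$ is mostly above $\Sigma_{s_0}$ after all. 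This is exactly the paper's observation: failure of (2) at $s_0$ is \emph{equivalent} to $(s_0,t)\in R_a\cup R_b$ for some $t$, so the set $C$ of $s_0$ satisfying (2) is precisely the complement of the $s$-projection of $R_a\cup R_b$, and is nonempty by the splitting hypothesis. Your subsequent plan---forcing $R$ to have several boundary components, building essential disks $E^\pm$ on both sides, and invoking $d(T)\geq 2$---is aimed at a configuration that cannot occur, and the distance hypothesis plays no role in this lemma (it is used only later, in Lemma~\ref{schleimer}).

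There is also a smaller gap in your treatment of (1). You assert that the $s$-projections of $R_a$ and $R_b$ are closed half-intervals, making your set $U$ open and hence automatically thick enough to avoid the finitely many vertex abscissae. Whether the bounding graphic edges lie in $R_a,R_b$ is delicate at tangency levels, and the paper does not rely on this. Instead it treats $C$ as a closed interval which could a priori degenerate to a single point $\{s_1\}$; it then observes that this would force a crossing vertex at $(s_1,t_1)$ across which $H^+\cap T_{t_1}$ passes from lying in a disk to covering $T_{t_1}$ up to disks via only two saddles, possible only when $T$ is a torus. This is where the hypothesis $\operatorname{genus}(T)\geq 2$ is actually used. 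You should either justify your openness claim carefully or follow the paper's case analysis.
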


\begin{proof}
Let $C$ be the set of values $s_0\in (-1,1)$ satisfying the condition (2).
When the condition (2) fails, either $H^-_{s_0}\cap T_t$ or $H^+_{s_0}\cap T_t$ is contained in a disk in $T_t$ for some value $t$, so $T_t$ is mostly above or below $\Sigma _{s_0}$.
Therefore $C$ can be considered as the complement of the projections of $R_a\cup R_b$ in $[-1,1]\times \{ \text{pt}\} $.
Since $f$ splits $g$, the set $C$ is a non-empty closed interval.

If $C$ is a single point $\{ s_1\} $, there is a crossing vertex $(s_1,t_1)$ of which the left quadrant is contained in $R_a$ and the right quadrant is contained in $R_b$.
For a small $\varepsilon $, the intersection $H^+_{s_1-\varepsilon }\cap T_{t_1}$ becomes $H^+_{s_1+\varepsilon }\cap T_{t_1}$ by a transformation including only two singularities.
However, $H^+_{s_1-\varepsilon }\cap T_{t_1}$ is contained in a disk while $H^+_{s_1+\varepsilon }\cap T_{t_1}$ covers $T_{t_1}$ except for some disks.
This is possible only when $T_{t_1}$ is a torus.
Since we assume the genus of $(T,G^-,G^+)$ is at least $2$, the closed interval $C$ is non-trivial.

There are finitely many vertices in the graphic, so there exists a value $s_0$ in $C$ such that the vertical arc $\{ s_0\} \times [-1,1]$ passes through no vertices of the graphic.
\end{proof}

Similarly to $H^-_s$ and $H^+_s$, we will write $G^-_t=g^{-1}([-1,t])$ and $G^+_t=g^{-1}([t,1])$.

\begin{Lem}\label{schleimer}
There exists a non-trivial closed interval $[a,b]\subset [-1,1]$ such that:
\begin{enumerate}
\item For a small $\varepsilon $, the intersection $\Sigma _{s_0}\cap T_{a-\varepsilon}$ has a component bounding an essential disk of $G^-_{a-\varepsilon }$ or $a=-1$.
\item For each $t\in (a,b)$, the intersection $\Sigma _{s_0}\cap T_t$ does not have any loops bounding essential disks of $G^-_t$ or $G^+_t$.
\item For a small $\varepsilon $, the intersection $\Sigma _{s_0}\cap T_{b+\varepsilon}$ has a component bounding an essential disk of $G^+_{b+\varepsilon }$ or $b=1$.
\end{enumerate}
\end{Lem}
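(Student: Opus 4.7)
The plan is to localize intervals of $(-1,1)$ where intersection loops bound compressing disks in one side or the other, and then use $d(T)\geq 2$ to force a gap between them. Set $A^\pm =\{ t\in (-1,1)\mid \Sigma _{s_0}\cap T_t \text{ has a component bounding an essential disk of } G^\pm _t\} $. Between consecutive critical values of $g\mid _{\Sigma _{s_0}}$ the isotopy class on $T$ of $\Sigma _{s_0}\cap T_t$ is constant, so $A^-$ and $A^+$ are unions of such open intervals. Disjointness $A^-\cap A^+=\emptyset $ is immediate: disjoint loops $c^\pm \subset \Sigma _{s_0}\cap T_t$ bounding disks in $G^\pm _t$ would be disk-set vertices of the curve complex of $T$ at distance $\leq 1$, contradicting $d(T)\geq 2$.

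The key step is that no single critical value $t_0$ transitions directly from $A^-$ below to $A^+$ above. Suppose otherwise, with disk-bounders $c^\pm \in \Sigma _{s_0}\cap T_{t_0\pm \varepsilon }$ in $G^\pm _{t_0\pm \varepsilon }$. The intersections $\Sigma _{s_0}\cap T_{t_0\pm \varepsilon }$ differ by one saddle/band surgery. If $c^-$ is not among the loops involved in this surgery, it persists as a component of $\Sigma _{s_0}\cap T_{t_0+\varepsilon }$, so $c^-$ and $c^+$ become disjoint essential loops on $T_{t_0+\varepsilon }$ bounding disks in opposite compression bodies, again giving $d(T)\leq 1$. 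The symmetric case in which $c^+$ is uninvolved is identical. Otherwise $c^+$ arises from $c^-$ (and possibly a second loop) by one band surgery on $T$, and the surgered curve admits a representative on $T$ disjoint from $c^-$, once more contradicting $d(T)\geq 2$.

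For the endpoint behavior, near $t=-1$ the surface $T_t$ is the boundary of a small regular neighborhood of the spine $\Theta ^-$; after a small perturbation $\Sigma _{s_0}$ meets $\Theta ^-$ transversally at interior edge points, so every component of $\Sigma _{s_0}\cap T_t$ is a small meridian of a handle of $G^-_t$, essential on $T_t$ (since the genus of $T$ is $\geq 2$) and bounding an essential meridian disk. Lemma \ref{2-21} supplies such a component, so $t\in A^-$ for $t$ near $-1$; symmetrically $t\in A^+$ for $t$ near $1$. Let $J^-$ be the rightmost regular interval contained in $A^-$ and $J^+$ the first regular interval contained in $A^+$ lying to the right of $J^-$; both exist by the endpoint argument. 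By the previous paragraph, at least one regular interval lies strictly between $J^-$ and $J^+$ and is outside $A^-\cup A^+$. Setting $a$ to be the right endpoint of $J^-$ and $b$ to be the left endpoint of $J^+$ yields the required $[a,b]$, and clauses (1)--(3) hold by construction (with the $a=-1$ or $b=1$ convention when the corresponding endpoint interval reaches the boundary). The main obstacle is the band-surgery argument: the surgered curve must be shown to be essential on $T$ and to admit a representative disjoint from $c^-$, so that the distance hypothesis $d(T)\geq 2$ can be invoked.
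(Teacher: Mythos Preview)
Your band-surgery argument along the vertical arc $\{s_0\}\times(-1,1)$ is sound; the concern you flag at the end is not really an obstacle, since $c^+$ is essential on $T$ by hypothesis (it bounds an \emph{essential} disk in $G^+$) and the three curves involved in a single saddle cobound a pair of pants, hence have pairwise disjoint representatives on $T$. This gives a valid one-dimensional alternative to the paper's proof. The paper works instead in the full square: it introduces the regions $R_\pm\subset(-1,1)^2$, observes that no \emph{horizontal} arc meets both (two level loops of $f|_{T_{t_0}}$, at possibly different heights $s$, bounding disks on opposite sides would already force $d(T)\le 1$), and concludes that $a=b$ would force $(s_0,a)$ to be a crossing vertex of the graphic---contradicting the choice of $s_0$ made in Lemma~\ref{2-21}. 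The paper's route is shorter because it recycles the vertex-free property of $\{s_0\}\times[-1,1]$ already secured; yours is more self-contained and does not need the two-dimensional labeling.

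There is, however, a genuine gap in your endpoint argument. You assert that near $t=-1$ the spine is the graph $\Theta^-$, so every component of $\Sigma_{s_0}\cap T_t$ is an edge-meridian, and then invoke symmetry at $t=1$. But in this section $N=M_i^*$ has $\partial N=S^2$, so exactly one of $G^-,G^+$ is a compression body with $\partial_-$ equal to that sphere; on that side the spine is $\Theta^\pm\cup S^2$, and intersection loops arising from $\Sigma_{s_0}\cap S^2$ are \emph{not} edge-meridians. Your meridian description is therefore false on one side, the word ``symmetrically'' is unjustified, and your construction of $J^\pm$ presupposes that both $A^-$ and $A^+$ are nonempty. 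The paper sidesteps this entirely by declaring $a=-1$ (resp.\ $b=1$) when $A^-$ (resp.\ $A^+\cap[a,1]$) is empty---which is exactly why clauses~(1) and~(3) carry the alternatives ``or $a=-1$'', ``or $b=1$''---and defers the analysis near the sphere boundary to the proof of Lemma~\ref{splitting}. Adopting the same convention repairs your argument with no further work: set $a=\sup A^-$ (or $a=-1$ if $A^-=\emptyset$) and $b=\inf(A^+\cap[a,1])$ (or $b=1$ if this set is empty); your band-surgery step together with $A^-\cap A^+=\emptyset$ still yields $a<b$, and clauses~(1)--(3) hold with the stated alternatives.
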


\begin{proof}
Let $R_-$ be the set of points $(s,t)\in (-1,1)\times (-1,1)$ such that $\Sigma _s\cap T_t$ has a component bounding an essential disk of $G^-_t$.
Similarly, Let $R_+$ be the set of points such that $\Sigma _s\cap T_t$ has a component bounding an essential disk of $G^+_t$.
They determine another labeling for the graphic.

Let $a$ be the maximum of the closure of $R_-\cap (\{ s_0\} \times [-1,1])$ (or $-1$ if $R_-\cap (\{ s_0\} \times [-1,1])=\emptyset $).
Let $b$ be the minimum of the closure of $R_+\cap (\{ s_0\} \times [a,1])$ (or $1$ if $R_+\cap (\{ s_0\} \times [a,1])=\emptyset $).

If there is a horizontal arc $[-1,1]\times \{ t_0\} $ which intersects both $R_-$ and $R_+$, the level surface $T_{t_0}$ has a level loop of $f\mid _{T_{t_0}}$ bounding an essential disk of $G^-_t$ and a level loop bounding an essential disk of $G^+_t$.
It contradicts that the distance of $(T,G^-,G^+)$ is at least $2$.
Therefore no horizontal arcs intersect both $R_-$ and $R_+$.
If $a=b$ then $(s_0,a)$ must be a crossing vertex of the graphic.
Since there are no vertices on $\{ s_0\} \times [-1,1]$, the closed interval $[a,b]$ is non-trivial.
\end{proof}

Figure \ref{fig12} illustrates the segment $\{ s_0\} \times [a,b]$.
We will consider the intersection loops on this segment and construct a subcomplex of $C(T_0)$ from these loops.

\begin{figure}[ht]
\begin{center}
\caption{}
\vspace{.2cm}
\includegraphics[width=3.3cm]{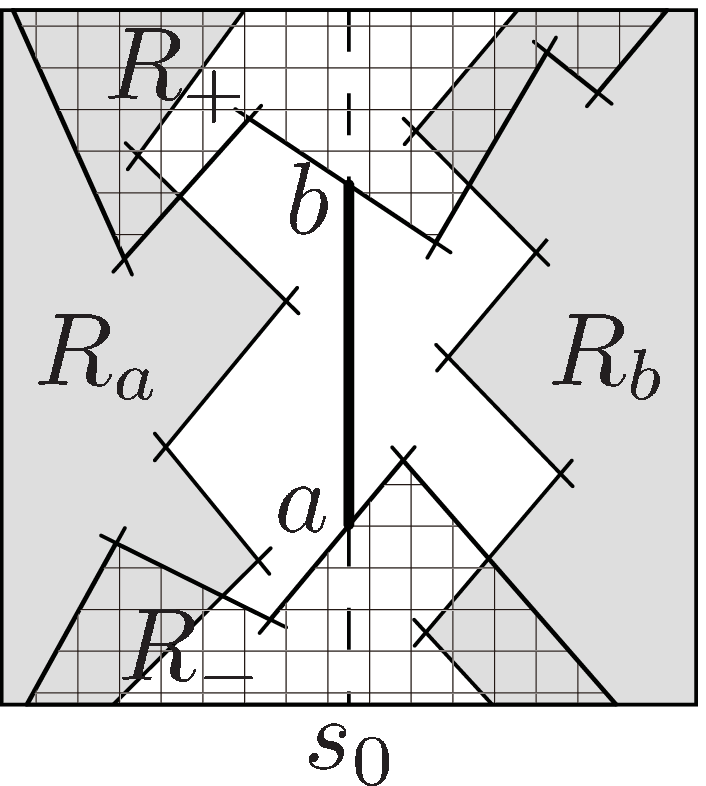}
\label{fig12}
\end{center}
\end{figure}

Let $a'$ be a regular value for $g\mid _{\Sigma _{s_0}}$ just above $a$ and let $b'$ be a regular value for $g\mid _{\Sigma _{s_0}}$ just below $b$.
Let $\Delta $ be the union of the disks bounded by the inessential loops of $\Sigma _{s_0}\cap g^{-1}(\{ a',b'\} )$ in $\Sigma _{s_0}$.
Let $F$ be the union of $\Sigma _{s_0}\cap g^{-1}([a',b'])$ and $\Delta $.
Consider a projection map $\pi $ from $g^{-1}([a',b'])$ onto $T_0$.

\begin{Lem}
If two level loops of $g\mid _{F}$ are isotopic in $F$ then their projections are isotopic in $T_0$.
\end{Lem}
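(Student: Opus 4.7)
The plan is to exploit the product structure $g^{-1}([a',b']) \cong T_0 \times [a',b']$ inherent in the sweep-out $g$, under which $\pi$ is the first projection. In particular, for each $t \in [a',b']$ the restriction $\pi|_{T_t}$ is a homeomorphism onto $T_0$, so any level loop $\ell \subset F \cap T_t$ projects to a well-defined embedded loop $\pi(\ell) \subset T_0$.

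Let $\ell_1 \subset F \cap T_{t_1}$ and $\ell_2 \subset F \cap T_{t_2}$ be the two level loops, and let $A \subset F$ be an embedded annulus realizing the assumed isotopy. After a small perturbation, I would assume $g|_A$ is a Morse function with $\ell_1,\ell_2$ as regular level sets bounding $A$ and all interior critical values distinct; since $\chi(A)=0$, the numbers of centers and saddles of $g|_A$ are equal. If $A$ meets the capping disks $\Delta$ in arcs or loops, an innermost-disk argument on $\Delta$ (each component being a disk) eliminates such intersections, leaving $A \subset \Sigma_{s_0} \cap g^{-1}([a',b']) \subset T_0 \times [a',b']$.

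With $A$ in the product region, I would track, for each regular value $t$ of $g|_A$, the component $\ell_t$ of $A \cap T_t$ joined to $\ell_1$ by the Morse flow. Between critical values, $\ell_t$ varies by an isotopy in $T_t$ that projects through the homeomorphism $\pi|_{T_t}$ to an isotopy of $\pi(\ell_t)$ in $T_0$. At a center, a loop appears or disappears that bounds a disk on $A$; via the product structure its projection bounds the projected disk in $T_0$ and so is inessential, leaving $\pi(\ell_t)$ unchanged up to isotopy. At a saddle, $\ell_t$ is either undisturbed or band-summed (inside $A$, hence inside $T_0$ after projection) with another component; pairing saddles with centers via $\chi(A)=0$, the auxiliary component bounds a subdisk of $A$ and projects to a null-homotopic loop, so the band sum realizes an isotopy.

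Concatenating these local isotopies as $t$ runs from $t_1$ to $t_2$ yields the desired isotopy in $T_0$ between $\pi(\ell_1)$ and $\pi(\ell_2)$. The main obstacle is the saddle analysis: verifying that at each saddle the auxiliary component really bounds a disk on $A$ (so that its projection is null-homotopic in $T_0$) requires matching critical points carefully within the annular Morse theory. Disentangling $A$ from $\Delta$ through innermost-disk surgery is a further technical step, but both follow standard techniques.
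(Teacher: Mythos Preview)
Your handling of the capping disks $\Delta$ contains a genuine gap. Recall that $F$ is the \emph{union} of $\Sigma_{s_0}\cap g^{-1}([a',b'])$ and $\Delta$, and all of this sits inside the single surface $\Sigma_{s_0}$. The annulus $A$ is therefore a subsurface of $F$, and each component of $\Delta$ is a disk subsurface of $F$. Since $\partial A=\ell_1\cup\ell_2$ lies at interior levels while each $\Delta$-disk has its boundary at level $a'$ or $b'$, the annulus $A$ either contains a given $\Delta$-disk entirely or is disjoint from it. There is no transverse intersection in arcs or loops, so an innermost-disk argument simply does not apply. If $A$ swallows a $\Delta$-disk you cannot isotope it off inside $F$, and cutting it out leaves a punctured annulus, not an annulus, so your subsequent Morse argument on ``the annulus $A$ in the product region'' never gets off the ground.

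The idea you are missing, and which the paper exploits, is condition~(2) of the preceding lemma: for $t\in(a,b)$ no loop of $\Sigma_{s_0}\cap T_t$ bounds an essential disk in $G^-_t$ or $G^+_t$. Applied at $t=a'$ and $t=b'$, this forces the boundary of each $\Delta$-disk to be inessential in $T_{a'}$ or $T_{b'}$, hence to bound a disk there. One then \emph{replaces} each $\Delta$-disk contained in $A$ by the corresponding disk in the level surface, producing a (possibly only immersed) annulus $A'$ with $\partial A'=\ell_1\cup\ell_2$ that lies entirely in $g^{-1}([a',b'])\cong T_0\times[a',b']$. At this point no Morse analysis is needed: the projection $\pi(A')$ is a singular annulus in $T_0$ realizing a free homotopy from $\pi(\ell_1)$ to $\pi(\ell_2)$, and freely homotopic simple closed curves on a surface are isotopic. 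Your center/saddle bookkeeping, which you yourself flag as the main obstacle, is thereby avoided altogether.
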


\begin{proof}
Any two level loops are disjoint in $F$ so if two level loops are isotopic then they bound an annulus $A\subset F$.
Note that $A$ may contain some disks of $\Delta $.
By the condition (2) in Lemma \ref{schleimer}, the boundary of a disk of $\Delta $ also bounds a disk in $T_{a'}$ or $T_{b'}$.
Replacing the disks of $\Delta $ by the disks in $T_{a'}$ or $T_{b'}$, we can produce a new annulus $A'$ contained in $g^{-1}([a',b'])$.
The projection of $A'$ into $T_0$ determines a homotopy from the image of one boundary of $A'$ to the image of the other.
Thus the projections of the two loops are isotopic.
\end{proof}

Let $L$ be the set of isotopy classes of level loops of $g\mid _{F}$.
A representative of an element $l\in L$ projects to a simple closed curve in $T_0$.
If the projection is essential in $T_0$, we define $\pi _*(l)$ to be the corresponding vertex of the curve complex $C(T_0)$.
If the projection is inessential, we define $\pi _*(l)=0$.
By the previous lemma, $\pi _*$ is well defined as a map from $L$ to the disjoint union $C(T_0)\sqcup \{ 0\} $.

Isotopy classes of essential level loops of $g\mid _{F}$ determine a pair-of-pants decomposition for $F$.
The following can be proved identically as \cite[Lemma 23]{johnson2}.

\begin{Lem}
If $l_1$ and $l_2$ are cuffs of the same pair of pants in $F\setminus L$ then their projections can be isotoped to be disjoint.
\end{Lem}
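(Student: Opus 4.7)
The plan is to use the Morse-theoretic structure of $g|_P$ on the pair of pants $P$, together with the product structure on $g^{-1}([a',b'])\cong T_0\times [a',b']$ that defines $\pi$. Each cuff $l_i$ is an essential level loop sitting in $T_{t_i}\times\{t_i\}$ for some level $t_i$, so $\pi(l_i)$ is literally that loop transported to $T_0$.

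First I would analyze $g|_P$. Since $\chi(P)=-1$ and each of the three cuffs is, as a boundary level loop, a local extremum of $g|_P$ on $\partial P$, a generic Morse count forces the configuration of interior critical points to be very constrained (typically a single saddle). I would consider a level $t^*$ that lies just past the interior saddle, on the side where $P\cap T_{t^*}$ consists of two disjoint essential loops $\mu,\mu'$ in $T_{t^*}$. By construction each of $\mu,\mu'$ is parallel in $P$ to one of the cuffs; call them $l_i$ and $l_j$. Because $\mu$ and $\mu'$ lie on the common level surface $T_{t^*}$, they are disjoint there, and so their images $\pi(\mu),\pi(\mu')$ are disjoint in $T_0$. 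The preceding lemma applied in $F$ identifies $\pi(\mu)$ with $\pi_*(l_i)$ and $\pi(\mu')$ with $\pi_*(l_j)$ up to isotopy, yielding disjoint representatives for that pair.

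To handle the remaining pair(s) of cuffs (the case where one of $l_1,l_2$ is on the opposite side of the saddle from the other), I would construct disjoint parallel copies by tracking a cuff through the saddle. Concretely, take a collar of the "lone" cuff in $P$ running up to a level just past the saddle; the resulting parallel copy still represents $\pi_*$ of that cuff, and one can arrange it to be disjoint on $T_0$ from a suitable parallel copy of the other cuff by choosing the collar to avoid the subsurface of $P$ that carries the other cuff's parallel loops. Equivalently, one chooses an embedded monotone arc in $P$ from the one cuff to the other avoiding the third, and uses it together with the product structure to slide one representative off the other.

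The main obstacle is the case-analysis on how the cuff levels are arranged relative to the interior saddle of $g|_P$: the direct ``common level'' argument only immediately delivers disjointness for the two cuffs on the ``split'' side, and some extra care is required to push a parallel copy of the isolated cuff onto the same level as a parallel copy of a post-saddle cuff while controlling its isotopy class in $T_0$. This is exactly the work done in \cite[Lemma 23]{johnson2}, and since $F$ and $g|_F$ have the same local structure here as in Johnson's setting, his argument transfers verbatim.
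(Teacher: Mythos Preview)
Your proposal is correct and matches the paper's approach: the paper gives no independent argument for this lemma at all, but simply states that it ``can be proved identically as \cite[Lemma 23]{johnson2}.'' Your sketch of the Morse-theoretic picture on the pair of pants (a single essential saddle, two cuffs appearing disjointly on one side of it, and the previous lemma transporting the isotopy to $T_0$) is exactly the content of Johnson's Lemma~23, and you correctly conclude by invoking that reference.
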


For each regular value $t\in [a',b']$ for $g\mid _{F}$, let $L^t$ be the set of isotopy classes of loops in $F\cap T_t$.
Loops in $F\cap T_t$ are pairwise disjoint so their projections are pairwise disjoint.
Moreover the projections contain at least one essential loop by the condition (2) in Lemma \ref{2-21}.
Therefore the subcomplex $L^t_C$ of $C(T_0)$ spanned by $\pi _*(L^t)\cap C(T_0)$ is non-empty.

If there are no critical levels for $g\mid _{F}$ between regular values $t_1$ and $t_2$ then $L^{t_1}=L^{t_2}$, so $L^{t_1}_C=L^{t_2}_C$.
If there is a single critical level of center tangency between $t_1$ and $t_2$, the difference between $L^{t_1}$ and $L^{t_2}$ is the isotopy class of a trivial loop in $F$.
By the condition (2) in Lemma \ref{schleimer}, a trivial loop in $F$ projects to a trivial loop in $T_0$.
It implies $\pi _*(L^{t_1})\cap T_0=\pi _*(L^{t_2})\cap T_0$, so $L^{t_1}_C=L^{t_2}_C$.
If there is a single critical level of saddle tangency between $t_1$ and $t_2$, either one loop in $F\cap T_{t_1}$ is replaced by two loops in $F\cap T_{t_2}$ or two loops in $F\cap T_{t_1}$ is replaced by one loop in $F\cap T_{t_2}$ at the critical level.
If those three loops are essential in $F$, they bound a pair of pants in $F\setminus L$.
By the previous lemma, their projections can be isotoped to be pairwise disjoint.
Thus, there is an edge of $C(T_0)$ connecting $L^{t_1}_C$ and $L^{t_2}_C$.
If one of those three loops is trivial in $F$ then $L^{t_1}_C$ and $L^{t_2}_C$ have common vertices.
Because $L$ is the union of $L^t$ over all regular values  for $g\mid _{F}$, the subcomplex $L_C$ of $C(T_0)$ spanned by $\pi _*(L)\cap C(T_0)$ is connected.

Consider two vertices $v$ and $v'$ in $L_C$.
Suppose $v=v_0,v_1,\ldots ,v_n=v'$ is the shortest edge path connecting them in $L_C$.
Let $l_i\in L$ projects to $v_i$ for each $i=0,1,\ldots ,n$.
If $l_i$ and $l_j$ are cuffs of the same pair of pants in $F\setminus L$ then there is an edge of $L_C$ connecting $v_i$ and $v_j$.
Since the path is minimal, $i$ and $j$ must be consecutive.
Then, we can estimate the diameter of $L_C$ by the number of pairs of pants in $F\setminus L$.
The number of pairs of pants in $F\setminus L$ is at most the negative Euler characteristic of $F$.
Since the boundary components of $F$ are essential in $\Sigma _{s_0}$, the Euler characteristic of $F$ is at least that of $\Sigma _{s_0}$.
We can conclude that the diameter of $L_C$ is at most $2k-2$.
See the proof of \cite[Lemma 24]{johnson2} for the details of this argument.

We are ready to prove the following:

\begin{Lem}\label{splitting}
If $(\Sigma ,H^-,H^+)$ splits $(T,G^-,G^+)$ then $2k\geq d(T_0)$.
\end{Lem}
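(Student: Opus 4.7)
The plan is to combine the diameter bound $\mathrm{diam}(L_C)\le 2k-2$ already established with input from the two endpoints $a,b$ supplied by Lemma~\ref{schleimer}. Specifically, we aim to produce vertices $v^\pm\in L_C$ and curves $w^\pm\in C(T_0)$ bounding essential disks in $G^\pm$ respectively, with $d(v^\pm,w^\pm)\le 1$; the triangle inequality
$$d(T_0)\le d(w^-,w^+)\le d(w^-,v^-)+d(v^-,v^+)+d(v^+,w^+)\le 1+(2k-2)+1=2k$$
will then conclude.

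Extracting $w^-$ is immediate from Lemma~\ref{schleimer}(1): a loop $\ell^-\subset\Sigma_{s_0}\cap T_{a-\varepsilon}$ bounds an essential disk in $G^-_{a-\varepsilon}$, and since $g$ has no critical values in $(-1,1)$, the sweep-out isotopy carries this disk to an essential disk in $G^-$ whose boundary $w^-$ is a vertex of $C(T_0)$. Symmetrically from condition (3) we obtain $w^+$.

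The heart of the argument is locating $v^-$ within distance $1$ of $w^-$. The critical event of $g|_{\Sigma_{s_0}}$ at $t=a$ cannot be a center tangency involving $\ell^-$, since a center loop is a small loop on $T_{a-\varepsilon}$ bounding a disk in $T_{a-\varepsilon}$, hence inessential and unable to bound an essential disk in $G^-_{a-\varepsilon}$. Nor can $\ell^-$ be unaffected by the event: otherwise the disk would extend through the product region $g^{-1}([a-\varepsilon,a'])$ to an essential disk of $G^-_{a'}$ bounded by a loop of $\Sigma_{s_0}\cap T_{a'}$, contradicting Lemma~\ref{schleimer}(2). Therefore $\ell^-$ participates in a saddle event at $t=a$, producing a band-sum relation on $\Sigma_{s_0}$: either $\ell^-=\alpha\#_b\beta$ splits into two loops $\alpha,\beta$ at $t=a'$, or $\ell^-$ merges with a second loop $\gamma$ at $t=a-\varepsilon$ into a single loop $\ell'=\ell^-\#_b\gamma$ at $t=a'$. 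In either case the three curves involved can be realized as pairwise disjoint simple closed curves on $T$, so the projection to $T_0$ of any new loop at $t=a'$ is disjoint from $w^-$. Essentiality of $\ell^-$ on $T$ (which follows from essentiality of its disk) forces essentiality of at least one of the new loops: in the split case, if both $\pi(\alpha),\pi(\beta)$ were inessential, the two bounding disks together with the band $b$ would assemble into a disk in $T$ bounded by $\ell^-$; in the merge case, if $\pi(\ell')$ were inessential, the subdisks of the disk bounded by $\ell'$ would cap $\ell^-$ off. The essential loop thus supplied is a vertex $v^-\in L_C$ with $d(v^-,w^-)\le 1$, and a symmetric argument at $t=b$ yields $v^+\in L_C$ with $d(v^+,w^+)\le 1$.

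The main obstacle is the band-sum bookkeeping of the preceding paragraph: ruling out center tangencies and checking, in each saddle subcase, that the band-sum relation forces essentiality of a daughter loop on $T$, which is what places $v^-$ inside $L_C$ rather than at the collapsed $0$-vertex. The degenerate endpoint cases $a=-1$ or $b=1$ (where no disk-bounding loop is provided by Lemma~\ref{schleimer}) require a separate short argument using the structure of $g$ near its spines, but they do not affect the main strategy. Once $v^\pm$ are located, connectedness of $L_C$ gives $d(v^-,v^+)\le\mathrm{diam}(L_C)\le 2k-2$, and the displayed triangle-inequality estimate delivers $d(T_0)\le 2k$.
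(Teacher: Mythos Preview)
Your overall strategy matches the paper's: use the diameter bound $\mathrm{diam}(L_C)\le 2k-2$ and connect each endpoint to an essential-disk curve by a single edge in $C(T_0)$. The split subcase of your saddle analysis is fine (the homotopy argument that both daughters inessential would force $\pi(\ell^-)$ inessential is correct). But the merge subcase has a genuine gap.

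You claim that if $\pi(\ell')$ were inessential, ``the subdisks of the disk bounded by $\ell'$ would cap $\ell^-$ off.'' This is not justified: if $\pi(\ell')$ bounds a disk $D\subset T_0$, the pairwise disjointness of $\pi(\ell^-),\pi(\gamma),\pi(\ell')$ only tells you $\pi(\ell^-)$ lies in $D$ or in $T_0\setminus D$, and nothing forces the former. What the pair-of-pants relation actually gives is that $\pi(\ell')$ is freely homotopic to a concatenation of $\pi(\ell^-)$ and $\pi(\gamma)$, so if $\pi(\ell')$ is trivial you only conclude that $\pi(\ell^-)$ is isotopic to $\pi(\gamma)$---both essential, but $\gamma$ lives at level $a-\varepsilon$, not $a'$, so this does not hand you a vertex of $L_C$.

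The paper sidesteps this by not insisting that $v^-$ come from the loops directly produced by the saddle involving $\ell^-$. Instead it observes that \emph{every} loop of $\Sigma_{s_0}\cap T_{a'}$ has projection disjoint from $w^-=\pi(\ell^-)$: the loops untouched by the saddle already appeared at level $a-\varepsilon$ disjoint from $\ell^-$, and the touched ones are handled by the pair-of-pants lemma. Then Lemma~\ref{2-21}(2) guarantees at least one such loop is essential in $T_0$, and that loop serves as $v^-\in L_C$. This appeal to Lemma~\ref{2-21}(2) is exactly what your argument is missing; once you insert it, the case analysis on how $\ell^-$ participates in the saddle becomes unnecessary.

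You also defer the cases $a=-1$ and $b=1$ to ``a separate short argument.'' The paper does supply one, and it uses a genuinely different mechanism: near the spine, condition~(2) of Lemma~\ref{schleimer} forces $\partial_-G^-=\partial N$ (otherwise every intersection loop would bound a disk in the handlebody $G^-_{a'}$), so every component of $\Sigma_{s_0}\cap T_{a'}$ is parallel to the sphere $\partial N$; since $T$ has genus at least $2$, there is an essential disk in $G^-_{a'}$ disjoint from all such loops. This should be written out rather than waved at.
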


\begin{proof}
Consider the case $a>-1$.
By the condition (1) and (2) in Lemma \ref{schleimer}, $\Sigma _{s_0}\cap T_{a-\varepsilon}$ has a component bounding an essential disk of $G^-_{a-\varepsilon }$ while $\Sigma _{s_0}\cap T_{a+\varepsilon}$ does not.
That implies $a$ must be a critical level for $g\mid _{\Sigma _{s_0}}$ containing a saddle tangency.
As above, the projections of the level loops before and after this singularity can be isotoped to be pairwise disjoint.
The projection of one of the level loops before this singularity bounds an essential disk of $G^-_0$.
The projections of the level loops after this singularity are contained in $L_C$.
Thus, the boundary of the essential disk of $G^-_0$ is connected to $L_C$ by an edge in $C(T_0)$.

Consider the case $a=-1$.
The compression body $G^-_{a'}$ is a small neighborhood of the spine.
If $G^-_{a'}$ is a handlebody, every component of $\Sigma _{s_0}\cap G^-_{a'}$ is an essential disk of $G^-_{a'}$.
It contradicts the condition (2) in Lemma \ref{schleimer}.
Therefore $\partial _-G^-_{a'}=\partial N$ and every component of $\Sigma _{s_0}\cap T_{a'}$ is parallel to $\partial _-G^-_{a'}$.
The compression body $G^-_{a'}$ has essential disks disjoint from any such loop because the genus of $\partial _+G^-_{a'}$ is at least $2$. 
Similarly to the above argument, the boundary of an essential disk of $G^-_0$ is connected to $L_C$ by an edge in $C(T_0)$.

Symmetrical arguments for $b$ imply that the boundary of an essential disk of $G^+_0$ is connected to $L_C$ by an edge in $C(T_0)$.
Since the diameter of $L_C$ is at most $2k-2$, the distance of $(T,G^-,G^+)$ is at most $2k$.
\end{proof}

\section{Isotopies of sweep-outs}\label{Isotopy}

While we recognize Heegaard splittings up to isotopy, the spanning or splitting condition can be changed by isotopies of the sweep-outs.
In this section, we need to observe the transition of the condition during an isotopy of one of the sweep-outs.
Recall we defined isotopies of smooth maps in Section \ref{Sweep-outs}.

Suppose again $M_1$ and $M_2$ are irreducible, closed, smooth $3$-manifolds other than $S^3$.
Let $M^*_i$ be the $3$-manifold obtained by removing an open ball from $M_i$ for each $i=1,2$.
Let $M$ be the union of $M^*_1$ and $M^*_2$ glued at their boundaries.
Take either $M^*_1$ or $M^*_2$, and rewrite it as $N$.
Suppose $(\Sigma ,H^-,H^+)$ is a Heegaard splitting for $M$, and $(T,G^-,G^+)$ is a Heegaard splitting of genus at least $2$ for $N$.

\begin{Lem}\label{isotopy}
If $(\Sigma ,H^-,H^+)$ spans $(T,G^-,G^+)$ positively and negatively then either there is a pair of sweep-outs $f$ and $g$ representing $(\Sigma ,H^-,H^+)$ and $(T,G^-,G^+)$ such that $f$ spans $g$ positively and negatively or $(\Sigma ,H^-,H^+)$ splits $(T,G^-,G^+)$.
\end{Lem}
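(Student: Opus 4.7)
The plan is a one-parameter interpolation argument. By hypothesis there exist generic pairs of sweep-outs $(f_0,g_0)$ and $(f_1,g_1)$ representing $(\Sigma,H^-,H^+)$ and $(T,G^-,G^+)$, with $f_0$ spanning $g_0$ positively and $f_1$ spanning $g_1$ negatively. Since $f_0$ and $f_1$ represent the same Heegaard splitting, applying an ambient isotopy of $M$ lets me assume $f_0 = f_1 =: f$. Then $g_0$ and $g_1$ represent the same Heegaard splitting of $N$, so I connect them by a smooth family $\{g_r\}_{r\in[0,1]}$; by a small initial perturbation I may also arrange that $(f,g_r)$ is a generic pair for all but finitely many $r \in [0,1]$.

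The key observation is that ``mostly above'' and ``mostly below'' are $C^\infty$-open conditions: containment of $H_s^\mp\cap T_t$ in an open disk of $T_t$ survives small perturbations of the sweep-out data. Defining
\[
I_\pm = \{r \in [0,1] : \text{some triple } (s,t_-,t_+) \text{ witnesses positive (resp.\ negative) spanning for } (f,g_r)\},
\]
both $I_+$ and $I_-$ are open subsets of $[0,1]$, with $0\in I_+$ and $1\in I_-$. If $I_+\cap I_-$ is non-empty, I pick any $r^*$ in the intersection; the density of generic pairs from \cite{kobayashi2} provides a nearby generic $g^*$, and openness of the witnesses preserves both spanning conditions for $(f,g^*)$. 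This produces a single pair realizing positive and negative spanning simultaneously, establishing the first alternative.

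Otherwise $I_+$ and $I_-$ are disjoint open subsets of $[0,1]$ separating $0$ from $1$, so by connectedness the complement $K := [0,1]\setminus(I_+\cup I_-)$ is non-empty and closed. On each component of the generic locus the graphic has constant topological type, hence the spanning/splitting category is locally constant; combined with the generic dichotomy ``exactly one of spanning or splitting happens'' noted after the splitting definition, and with the openness of $I_\pm$, I intend to show that some open sub-interval of generic parameters inside $K$ must fall in the ``splits'' category. Any such parameter delivers the desired splitting pair, giving the second alternative.

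The main obstacle is the last point: one must verify that the category cannot jump from ``positive only'' directly to ``negative only'' across a single non-generic parameter, and hence that $K$ has non-empty interior containing generic parameters. This reduces to a codimension-one analysis of the possible graphic degenerations along the family (pairs of vertices colliding on a common horizontal or vertical line, births or deaths of vertices, etc.), showing that each such transition either preserves the category or moves it through ``both'' or ``splits''. The analysis is standard in spirit but must be done carefully, and is the technical heart of the lemma.
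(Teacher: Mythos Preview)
Your outline follows essentially the same interpolation strategy as the paper (the paper fixes $g$ and varies $f$ rather than the reverse, but this is immaterial). The reduction to a transition parameter where the category jumps from ``positive only'' to ``negative only'' is exactly what the paper does.

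The gap is in the step you yourself flag as ``the technical heart'': you describe the codimension-one transition analysis as ``standard in spirit'' and do not carry it out, and in particular you never invoke the standing hypothesis that $T$ has genus at least~$2$. This hypothesis is not incidental; it is precisely what makes the jump impossible. In the paper's argument, at the non-generic parameter $r_0$ one has, for small $\varepsilon>0$, a vertical arc meeting $R_a$ above $R_b$ just before $r_0$ and $R_b$ above $R_a$ just after. Examining how the closures of $R_a$ and $R_b$ can come to touch along $\{s\}\times[-1,1]$ at $r_0$, one finds that either three crossing vertices of the graphic must lie on a common vertical line (ruled out since in a generic one-parameter family at most two vertices share a level, or a single valence-six vertex appears), or else a single vertex $v$ has its left quadrant in $R_a$ and its right quadrant in $R_b$. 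In the latter case, passing horizontally through $v$, two (or at most three, if $v$ is valence six) saddle singularities convert $H^+_s\cap T_t$ from being contained in a disk to covering $T_t$ except for disks; an Euler-characteristic count shows this is possible only when $T_t$ is a torus. Since $T$ has genus $\geq 2$, every case yields a contradiction, and hence some generic parameter must land in ``both'' or ``splits''.

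Without this genus hypothesis the conclusion is false for tori, so calling the analysis routine undersells it. Your write-up should carry out the case analysis explicitly and make clear where genus $\geq 2$ is used.
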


\begin{proof}
Since $(\Sigma ,H^-,H^+)$ spans $(T,G^-,G^+)$ positively, there are generic sweep-outs $f_0$ and $g$ representing $(\Sigma ,H^-,H^+)$ and $(T,G^-,G^+)$, respectively such that $f_0$ spans $g$ positively.
Since $(\Sigma ,H^-,H^+)$ also spans $(T,G^-,G^+)$ negatively, there are generic sweep-outs $f'$ and $g'$ representing $(\Sigma ,H^-,H^+)$ and $(T,G^-,G^+)$, respectively such that $f'$ spans $g'$ negatively.

The sweep-outs $g$ and $g'$ represent the same Heegaard splitting, so $g'$ will be isotopic to $g$ after an appropriate sequence of handle slides of the spines.
The handle slides can be done in an arbitrarily small neighborhood of the original spines so that $f'$ still spans $g'$ negatively.
Therefore we can assume there is an isotopy taking $g'$ to $g$.
By the definition, there are diffeomorphisms $h_N:N\rightarrow N$ and $h_I:[-1.1]\rightarrow [-1,1]$ such that $g=h_I\circ g'\circ h_N$.
Let $h_M:M\rightarrow M$ be an arbitrary extension of $h_N$, and define $f_1=h_I\circ f'\circ h_M$.
Then $f_1$ spans $g$ negatively.

Similarly, we can assume $f_0$ is isotopic to $f_1$ because $f_0$ and $f_1$ represent the same Heegaard splitting.
According to \cite[Lemma 26]{johnson2}, there is a continuous family of sweep-outs $\{ f_r\mid r\in [0,1]\} $ such that $f_r$ and $g$ is generic for all but finitely many $r\in [0,1]$.
At the finitely many non-generic points, there are at most two valence-two or valence-four vertices at the same level, or one valence-six vertex.

For a generic value $r$, the sweep-out $f_r$ either spans $g$ or splits $g$.
Then we can assume that except for finitely many non-generic values, $f_r$ spans $g$ positively or negatively, but not both.
Since $f_0$ spans $g$ positively and $f_1$ spans $g$ negatively, there must be some non-generic value $r_0$ such that $f_{r_0-\varepsilon }$ spans $g$ positively while $f_{r_0+\varepsilon }$ spans $g$ negatively for a small $\varepsilon >0$.
Then we may consider three cases like Figures \ref{fig13}, \ref{fig14} and \ref{fig15}.
In the case Figure \ref{fig13} or \ref{fig14}, there are three valence-four vertices at the same level, which is a contradiction.
In the case Figure \ref{fig15}, if the vertex $v$ is valence-four, $T$ must be a torus, as explained above.
Even if the vertex $v$ is valence-six, the same argument implies $T$ is a torus, which is a contradiction.
\end{proof}

\begin{figure}[ht]
\begin{center}
\caption{}
\vspace{.2cm}
\includegraphics[width=7.5cm]{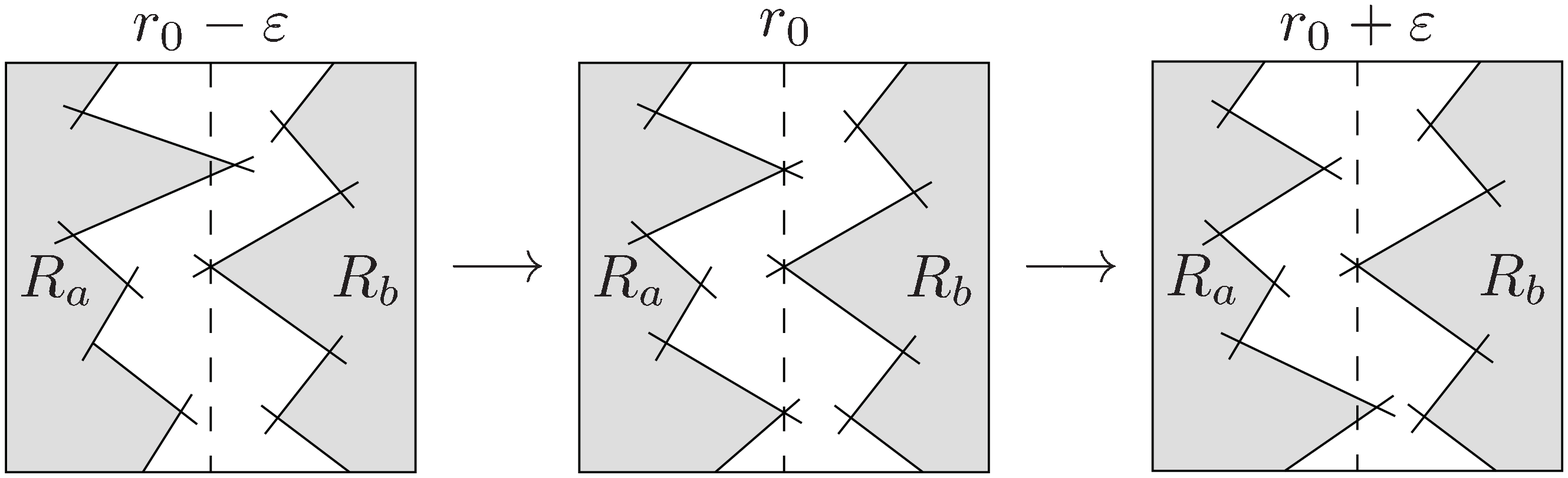}
\label{fig13}
\\
\caption{}
\vspace{.2cm}
\includegraphics[width=7.5cm]{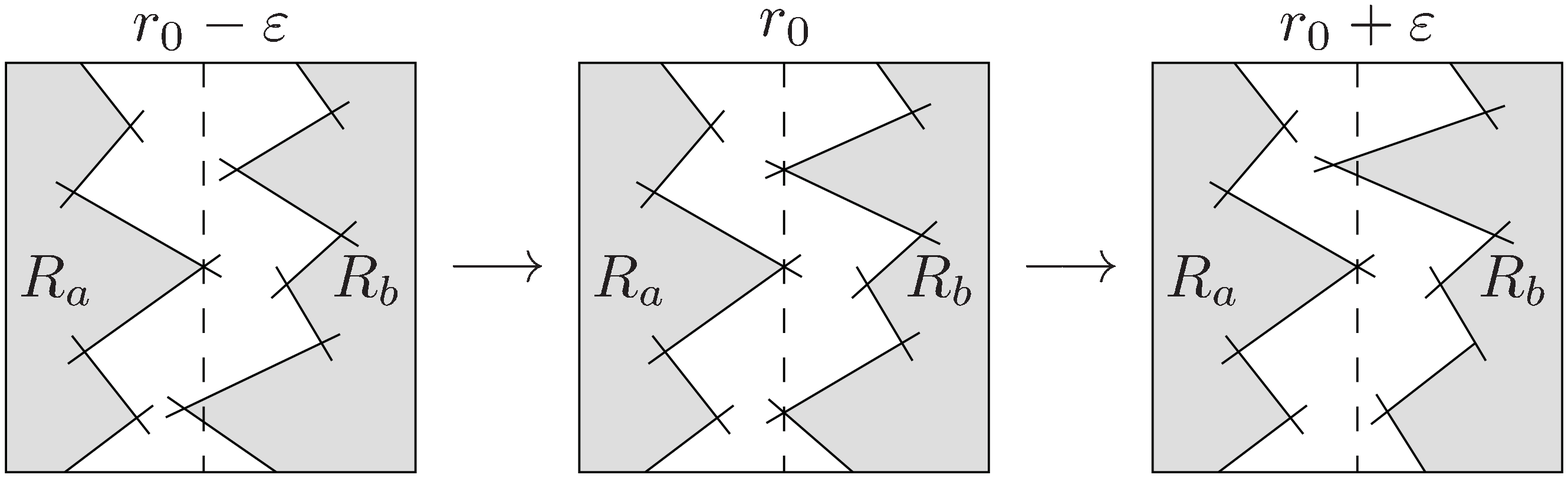}
\label{fig14}
\\
\caption{}
\vspace{.2cm}
\includegraphics[width=7.5cm]{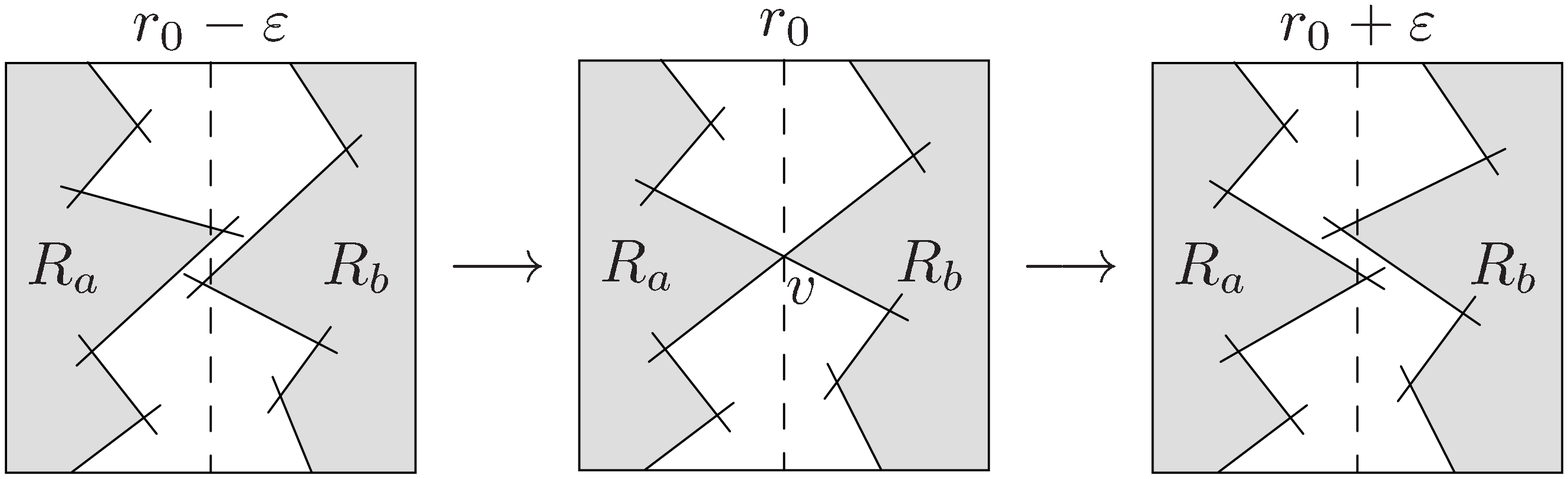}
\label{fig15}
\end{center}
\end{figure}

\section{Planar surfaces in a product space}

This section is for the final phase of the proof of the main theorem.
It may possibly be easy for the reader to take this section after a view of Section \ref{Main}.

Suppose $\Sigma $ is a closed, orientable, connected surface of genus $g$.
Let $W$ be the product space $\Sigma \times [s_-,s_+]$ where $s_-<s_+$.
Suppose $P$ is a separating, planar surface with $m_0$ components properly embedded in $W$.
Suppose $P$ separates $W$ into $W_-$ and $W_+$.
For each level $s\in [s_-,s_+]$, let $\Sigma ^{\pm }(s)$ be the intersection of $\Sigma \times \{ s\} $ with $W_{\pm }$.
We will focus on $\Sigma ^-(s_-)$ and $\Sigma ^+(s_+)$.
Let $g_-$and $g_+$ be the sum of the genera of all components of $\Sigma ^-(s_-)$ and $\Sigma ^+(s_+)$, respectively.

\begin{Lem}\label{planar}
$g\geq g_-+g_+$
\end{Lem}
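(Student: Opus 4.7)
My plan is to apply Morse theory to the height function $h\colon W \to [s_-, s_+]$, $h(x,t) = t$, restricted to $P$. After a small generic isotopy of $P$ (fixing $\partial P$), the function $h|_P$ is Morse. For each regular value $s$, let $g_\pm(s)$ denote the sum of the genera of the components of $\Sigma^\pm(s)$; then $g_- = g_-(s_-)$ and $g_+ = g_+(s_+)$. The main ingredient is an \emph{instantaneous bound}: for every regular value $s$ one has $g_-(s) + g_+(s) \leq g$. This holds because $\Sigma^-(s)$ and $\Sigma^+(s)$ are complementary subsurfaces of the closed genus-$g$ surface $\Sigma \times \{s\}$, glued along the circles $P \cap (\Sigma \times \{s\})$; gluing two orientable surfaces of genera $g_1, g_2$ along disjoint circles always yields a surface of genus at least $g_1 + g_2$, the difference being the first Betti number of the dual graph of the decomposition.

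With that bound in hand, the task is to relate $g_-(s_-) + g_+(s_+)$, which mixes the two endpoints, to a single-level quantity. I would proceed by induction on the number of critical points of $h|_P$. In the base case $h|_P$ has no critical points, so each component of $P$ is a vertical annulus and the product structure identifies $\Sigma^\pm(s_-)$ with $\Sigma^\pm(s_+)$ as subsurfaces of $\Sigma$; the instantaneous bound at $s = s_-$ then gives $g_- + g_+ \leq g$. The inductive step handles critical points of $h|_P$ one at a time: index-$0$ and index-$2$ critical points only add or remove a disk region and leave both $g_\pm(s)$ unchanged, while index-$1$ (saddle) critical points perform a band surgery that alters $g_-(s)$ and $g_+(s)$ each by $\pm 1$ or $0$ in constrained combinations depending on the global connectivity of $\Sigma^\pm(s)$ near the saddle. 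Such critical points should either be cancelled against a neighbouring extremum by an isotopy of $P$ across an embedded ball in $W$ (available by irreducibility of $\Sigma \times I$ when $g \geq 1$; the case $g=0$ is trivial), or their effect will be tracked directly using the instantaneous bound together with the combinatorics of the band surgery.

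The main obstacle will be the last step: handling the ``essential'' saddles of $h|_P$ that admit no Morse cancellation. The crucial input here is the planarity of $P$. Near each saddle the local pair of pants in $P$ has its three boundary circles bounding planar pieces of $P$, which constrains which combinations of global connectivity for $\Sigma^\pm$ can occur at the transition. Exploiting this, I expect one can rule out the pathological saddle transitions in which $g_-(s)$ strictly decreases while $g_+(s)$ strictly increases in a way that would violate the desired endpoint inequality, and so complete the induction. If the direct case analysis proves unwieldy, an alternative monovariant, such as $g_-(s) + g_+(s)$ corrected by component counts of $\Sigma^\pm(s)$, may furnish a cleaner invariant whose behaviour under every Morse transition can be uniformly controlled.
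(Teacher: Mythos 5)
Your plan has the right ingredients (Morse theory on $h|_P$, Euler characteristic bookkeeping, attention to component counts), and it correctly identifies the essential saddles of $h|_P$ as the crux of the matter. However, the argument as sketched has a genuine gap, and the two steps you flag as uncertain are in fact where the entire content of the lemma lives.

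The first problem is that the ``instantaneous bound'' $g_-(s)+g_+(s)\leq g$, while true, compares quantities at a \emph{single} level $s$, whereas the lemma compares $g_-(s_-)$ with $g_+(s_+)$ at the two \emph{opposite} ends of $W$. To pass from one to the other you would need something like monotonicity of $g_-(s)$ and $g_+(s)$, and that is false: a saddle of $h|_P$ can attach a band to a single component of $\Sigma^-(s)$ and raise its genus, so $g_-(s)$ can go both up and down as $s$ increases (and symmetrically for $g_+$). Your proposed induction therefore has no clear monovariant, and the ``pathological saddle transitions'' you hope to rule out by planarity of $P$ do actually occur; planarity alone does not forbid them. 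The paper does not try to rule them out. Instead, it first reduces to the case that $P$ is incompressible with no boundary-parallel components (so every component of $P$ meets both $\Sigma\times\{s_-\}$ and $\Sigma\times\{s_+\}$), isotopes $P$ to eliminate \emph{all} extrema of $h|_P$ (possible precisely because of that reduction, not by Morse cancellation against saddles, which is not always available), classifies the remaining saddles into four pair-of-pants types with counts $n_1,\dots,n_4$, and then proves the exact identity
$$g = g_- + g_+ + 1 + m_0 - m_- - m_+ + n_2 + n_4,$$
where $m_0,m_\pm$ are the component counts of $P$ and $\Sigma^\pm(s_\pm)$. The inequality then follows from three elementary connectivity estimates: $1+m_0\geq w_-+w_+$ (where $w_\pm$ is the number of components of $W_\pm$), $n_4\geq m_- - w_-$, and $n_2\geq m_+ - w_+$. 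So the ``corrected monovariant'' you speculate about at the end is indeed the right idea, but making it precise \emph{is} the proof; as written, your proposal stops exactly where the work begins.

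A smaller but real omission: you never reduce to incompressible $P$ nor discard components of $P$ whose boundary lies entirely on one end of $W$. Without that step the elimination of extrema fails, and those boundary-parallel pieces would also pollute the Euler characteristic count.
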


\begin{proof}
We can assume $P$ is incompressible in $W$ because compressions of $P$ does not change $g_-$ or $g_+$.

Consider a component of $P$ which has all its boundary components on $\Sigma \times \{ s_-\} $.
Such a surface is $\partial $-parallel, i.e. it can be isotoped onto $\Sigma \times \{ s_-\} $ \cite[Corollary 3.2]{waldhausen}.
Whichever it is parallel to a component of $\Sigma ^-(s_-)$ or $\Sigma ^+(s_-)$, the component has no genus because $P$ is planar.
Therefore deleting the component of $W_-$ or $W_+$ between these parallel surfaces does not reduce $g_-$ or $g_+$.
Thus, it is sufficient to prove the lemma assuming all such component has been deleted.
In other words, we can assume every components of $P$ has the boundaries both on $\Sigma \times \{ s_-\} $ and $\Sigma \times \{ s_+\} $.

Let $m_{\pm }$ be the number of components of $\Sigma ^{\pm }(s_{\pm })$ and let $p_{\pm }$ be the number of boundary components of $\Sigma ^{\pm }(s_{\pm })$.
Then the Euler numbers of the surfaces concerned can be written as fallows:
\begin{align*}
&\chi (\Sigma )=2-2 g,\\
&\chi (\Sigma ^-(s_-))=2 m_--2 g_--p_-,\\
&\chi (\Sigma ^+(s_+))=2 m_+-2 g_+-p_+,\\
&\chi (P)=2 m_0-p_--p_+.
\end{align*}

Let $f:W\rightarrow [s_-,s_+]$ be a projection.
We can assume $P$ is in general position with respect to $f$.
Moreover, we can assume $P$ has been isotoped so that there are no extrema because every component of $P$ has the boundaries both on $\Sigma \times \{ s_-\} $ and $\Sigma \times \{ s_+\} $.
Write $s_1=s_-,s_{n+1}=s_+$ and let $s_2<s_3<\dots <s_n$ be the regular values for $f|_P$ such that there is a single critical value for $f|_P$ between $s_i$ and $s_{i+1}$ for each $i=1,2,\ldots,n$.
Write $P_i=P\cap f^{-1}([s_i,s_{i+1}])$ for each $i=1,2,\ldots,n$.
Each $P_i$ is a collection of annuli except for one pair of pants component of some of types in Figure \ref{fig17}.

\begin{figure}[ht]
\begin{center}
\caption{}
\vspace{.2cm}
\includegraphics[height=2cm]{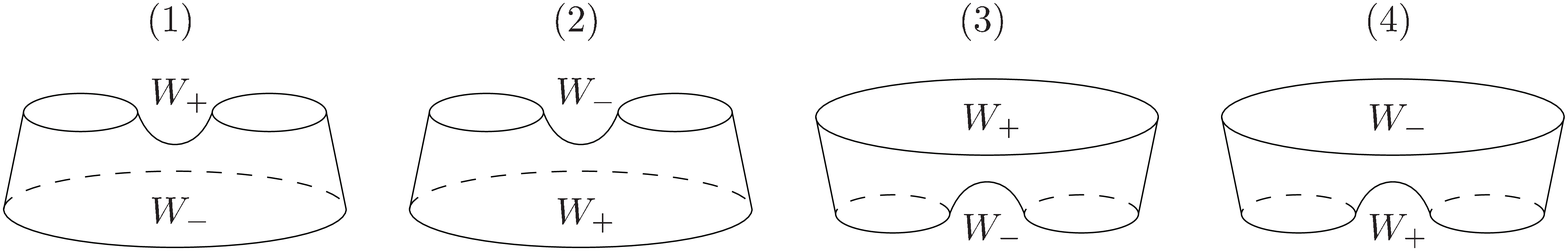}
\label{fig17}
\end{center}
\end{figure}

Consider the case where $P_i$ has a component of type $(1)$ for example.
The Euler number of $P_i$ is $-1$.
The surface $\Sigma ^+(s_{i+1})$ is homeomorphic to the union of $\Sigma ^+(s_i)$ and $P_i$.
Therefore the Euler number of $\Sigma ^+(s_{i+1})$ is one less than that of $\Sigma ^+(s_i)$.
Considering the other cases similarly, we obtain the following:
\begin{align*}
&\chi (P)=\sum ^n_{i=1}\chi (P_i)=-n_1-n_2-n_3-n_4,\\
&\chi (\Sigma ^+(s_+))-\chi (\Sigma ^+(s_-))=\sum ^n_{i=1}\{ \chi (\Sigma ^+(s_i))-\chi (\Sigma ^+(s_{i+1}))\} =-n_1+n_2-n_3+n_4
\end{align*}
where $n_j$ is the number of critical points of type $(j)$.

Because $\Sigma \times \{ s_-\} $ is the union of $\Sigma ^-(s_-)$ and $\Sigma ^+(s_-)$,
$$\chi (\Sigma )=\chi (\Sigma ^-(s_-))+\chi (\Sigma ^+(s_-)).$$
Applying above equations, we can arrive at a formula:
$$g=g_-+g_++1+m_0-m_--m_++n_2+n_4$$

Let $w_{\pm }$ be the number of components of $W_{\pm }$.
Then $w_-+w_+$ is the number of components of $W\setminus P$.
It implies
$$1+m_0\geq w_-+w_+.$$

Each of $m_-$ components of $\Sigma ^-(s_-)$ is contained in one of the $w_-$ components of $W_-$.
Let $W^0_-$ be a component of $W_-$ which contains $m^0_-$ components of $\Sigma ^-(s_-)$.
Observe the transformation of $W^0_-\cap \Sigma ^-(s)$ during the increasing of $s$ from $s_-$ to $s_+$.
Since $W^0_-$ is connected, there must be at least $m^0_--1$ critical points for $f|_{P\cap W^0_-}$ where two components of $W^0_-\cap \Sigma ^-(s)$ come to be connected.
Such critical points are type $(4)$.
Thus,
$$n_4\geq m_--w_-.$$
By the symmetrical argument,
$$n_2\geq m_+-w_+.$$

These inequalities immediately induce $g\geq g_-+g_+$.
\end{proof}

\section{The main theorem}\label{Main}

Johnson \cite{johnson2} constructed a counterexample for the Stabilization Conjecture by amalgamations of two Heegaard splittings with high distance along the torus boundaries.
We will make the same construction changing the place of torus boundaries by sphere boundaries.
By Proposition \ref{prop1}, an amalgamation along sphere boundaries is no other than a connected sum.
In this way, we arrive at the following conclusion.
Since Hempel \cite{hempel} showed that there exist Heegaard splittings with arbitrarily high distance, this immediately induces Theorem \ref{main}.

\begin{Thm}
Suppose $k\geq 2$ and $(T_i,G^-_i,G^+_i)$ is a Heegaard splitting of genus $k$ with distance at least $6k$ for a closed $3$-manifold $M_i$ for each $i=1,2$.
Let $(\Sigma _1,H^-_1,H^+_1)$ be the connected sum of $(T_1,G^-_1,G^+_1)$ and $(T_2,G^-_2,G^+_2)$.
Let\break $(\Sigma _2,H^-_2,H^+_2)$ be the connected sum of $(T_1,G^-_1,G^+_1)$ and $(T_2,G^+_2,G^-_2)$.
Then the stable genus of $(\Sigma _1,H^-_1,H^+_1)$ and $(\Sigma _2,H^-_2,H^+_2)$ is $3k$.
\end{Thm}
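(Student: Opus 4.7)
The plan is to show $3k \leq$ stable genus $\leq 3k$. The upper bound is a standard observation analogous to the one in \cite{johnson2}: stabilize each $\Sigma_i$ by $k$ times inside the $T_2$-summand. Since $d(T_2) \geq 6k$ forces the flip genus of $(T_2, G_2^-, G_2^+)$ to equal $2k$ (Johnson's lower bound from \cite{johnson1}; $k$ extra stabilizations suffice to render any genus-$k$ splitting flippable), the $k$-fold stabilization of $(T_2, G_2^-, G_2^+)$ is isotopic to that of $(T_2, G_2^+, G_2^-)$, and taking connected sum with $(T_1, G_1^-, G_1^+)$ yields isotopic genus-$3k$ stabilizations of $\Sigma_1$ and $\Sigma_2$.

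For the lower bound, let $(\Sigma, H^-, H^+)$ be any common stabilization of genus $p$; the goal is $p \geq 3k$. Write $N_i = M_i^* = M_i \setminus B_i$, so that $(T_i, G_i^-, G_i^+ \setminus B_i)$ is a Heegaard splitting of $N_i \subset M$. By Proposition \ref{prop1}, $\Sigma_1$ is the amalgamation of $(T_1, G_1^-, G_1^+ \setminus B_1)$ and $(T_2, G_2^+, G_2^- \setminus B_2)$, while $\Sigma_2$ is the amalgamation of $(T_1, G_1^-, G_1^+ \setminus B_1)$ and $(T_2, G_2^-, G_2^+ \setminus B_2)$. Applying Lemma \ref{1-9} to each summand, propagating through amalgamation via Lemma \ref{2-14} (together with the summand-order-swap symmetry in the definition of amalgamation) and inheriting to the common stabilization via Lemma \ref{2-12} yields that $\Sigma$ spans $(T_1, G_1^-, G_1^+ \setminus B_1)$ positively and spans $(T_2, G_2^-, G_2^+ \setminus B_2)$ both positively and negatively.

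Now apply Lemma \ref{isotopy} to the two-sided spanning of $T_2$: either $\Sigma$ splits $(T_2, G_2^-, G_2^+ \setminus B_2)$, in which case Lemma \ref{splitting} gives $2p \geq d(T_2) \geq 6k$ and $p \geq 3k$ is immediate; or there exist sweep-outs $f$ of $\Sigma$ and $g_2$ of $T_2$ with $f$ spanning $g_2$ both ways, whence Lemma \ref{spanning} produces some $s_+ \in (-1,1)$ with $g(\Sigma_{s_+} \cap M_2^*) \geq 2k$. For this same $f$, select a sweep-out $g_1$ of $(T_1, G_1^-, G_1^+ \setminus B_1)$ making $(f, g_1)$ positively spanning: starting from any witnessing pair $f', g_1'$ for the positive spanning of $T_1$, an ambient isotopy of $M$ (arranged to preserve $N_1$) carries $f'$ to $f$, and transporting $g_1'$ through its restriction to $N_1$ produces the desired $g_1$. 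A second application of Lemma \ref{spanning} gives $s_- \in (-1,1)$ with $g(\Sigma_{s_-} \cap M_1^*) \geq k$.

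Finally, conclude via Lemma \ref{planar}. A small isotopy puts the connected-sum sphere $S$ in general position with respect to $f$, so $S$ meets $\Sigma_{s_\pm}$ transversely. After relabeling $W_-$ and $W_+$ if necessary, assume $s_- \leq s_+$; then the product region $W = f^{-1}([s_-, s_+]) \cong \Sigma \times [s_-, s_+]$ contains the properly embedded, separating planar surface $P = S \cap W$, dividing $W$ into $W_- = W \cap M_1^*$ and $W_+ = W \cap M_2^*$. Lemma \ref{planar} gives
\[p = g(\Sigma) \geq g(\Sigma_{s_-} \cap M_1^*) + g(\Sigma_{s_+} \cap M_2^*) \geq k + 2k = 3k.\]
The main obstacle is securing a single sweep-out $f$ that simultaneously witnesses both the two-sided spanning of $T_2$ produced by Lemma \ref{isotopy} and the positive spanning of $T_1$; a secondary technicality is placing the connected-sum sphere $S$ generically enough with respect to $f$ so that $P$ satisfies the hypotheses of Lemma \ref{planar}.
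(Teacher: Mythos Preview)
Your overall strategy matches the paper's, but there are two genuine gaps.

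\textbf{The orientation case split is missing.} When you write that the common stabilization $(\Sigma,H^-,H^+)$ ``inherits'' spanning signs from both $\Sigma_1$ and $\Sigma_2$ via Lemma~\ref{2-12}, you are tacitly assuming that the isotopies from $\Sigma$ to the stabilizations of $\Sigma_1$ and of $\Sigma_2$ both carry $H^-$ to the ``$-$''~side. But a common stabilization is only an \emph{unoriented} isotopy, so one of the two isotopies may reverse sides. In that case $\Sigma$ spans $(T_2,G_2^-,G_2^{*+})$ with the \emph{same} sign from both $\Sigma_1$ and $\Sigma_2$, and your two-sided spanning of $T_2$ fails; instead it is $(T_1,G_1^-,G_1^{*+})$ that gets spanned both ways. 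The paper handles this by an explicit case split on whether the isotopy takes $H_1'^-$ to $H_2'^-$ or to $H_2'^+$, running the argument on $T_2$ in the first case and on $T_1$ in the second. (Relatedly, your amalgamation description of $\Sigma_1$ uses a ball on the $G_2^-$ side, while $\Sigma_2$ uses one on the $G_2^+$ side; to compare spanning against the \emph{same} splitting $(T_2,G_2^-,G_2^{*+})$ you need exactly the content of Proposition~\ref{prop2} and Lemma~\ref{2-16}, which you do not cite.)

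\textbf{Your ``main obstacle'' dissolves if you drop the demand for positive spanning.} You try to manufacture a sweep-out $g_1$ for which the already-fixed $f$ spans $g_1$ \emph{positively}, via an ambient isotopy of $M$ carrying $f'$ to $f$ while preserving $N_1$; you correctly flag that such an isotopy need not exist. The paper avoids this entirely: once $f$ is fixed (from the $T_2$ step), take \emph{any} sweep-out $g_1$ representing $(T_1,G_1^-,G_1^{*+})$ generic with $f$. By the span/split dichotomy, either $f$ splits $g_1$, in which case Lemma~\ref{splitting} already gives $2p\ge d(T_1)\ge 6k$ and you are done, or $f$ spans $g_1$ (in some direction), and the first clause of Lemma~\ref{spanning} gives an $s_1$ with $g(\Sigma_{s_1}\cap M_1^*)\ge k$. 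The sign of the spanning is irrelevant for that genus bound, so no isotopy gymnastics are needed. With this fix and the case split above, your argument becomes the paper's.
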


\begin{proof}
Since the genus of a connected sum is equal to the sum of the genera of original splittings, the genus of $(\Sigma _1,H^-_1,H^+_1)$ and $(\Sigma _2,H^-_2,H^+_2)$ is $2k$.
As remarked in \cite[Section 2]{hass}, the flip genus of any Heegaard splitting is at most twice the initial genus.
Therefore the Heegaard splitting $(T_2,G^-_2,G^+_2)$ become flippable after adding $k$ trivial handles.
It implies that adding $k$ trivial handles makes $(\Sigma _1,H^-_1,H^+_1)$ isotopic to $(\Sigma _2,H^-_2,H^+_2)$.
Thus, the stable genus is at most $3k$.
Then, we will show that the stable genus is at least $3k$.

Let $B_1$ and $ B_2$ be open balls in $G^+_1$ and $G^+_2$, respectively.
Write $M^*_i=M_i\setminus B_i$ and $G^{*+}_i=G^+_i\setminus B_i$ for each $i=1,2$.
The connected sum $M$ of $M_1$ and $M_2$ can be obtained by gluing $M^*_1$ and $M^*_2$ at their sphere boundaries.
$(T_i,G^-_i,G^{*+}_i)$ is a Heegaard splitting for a $3$-dimensional submanifold $M^*_i$ of $M$ for each $i=1,2$.
By Proposition \ref{prop1}, $(\Sigma _2,H^-_2,H^+_2)$ is the amalgamation of $(T_1,G^-_1,G^{*+}_1)$ and $(T_2,G^-_2,G^{*+}_2)$.
By Propositions \ref{prop1} and \ref{prop2}, $(\Sigma _1,H^-_1,H^+_1)$ is the amalgamation of $(T_1,G^-_1,G^{*+}_1)$ and the boundary stabilization of $(T_2,G^-_2,G^{*+}_2)$.

By Lemma \ref{1-9}, $(T_1,G^-_1,G^{*+}_1)$ spans itself positively.
By Lemma \ref{2-14}, $(\Sigma _2,H^-_2,H^+_2)$ spans $(T_1,G^-_1,G^{*+}_1)$ positively.
Similarly, $(\Sigma _2,H^-_2,H^+_2)$ spans $(T_2,G^-_2,G^{*+}_2)$ negatively and $(\Sigma _1,H^-_1,H^+_1)$ spans $(T_1,G^-_1,G^{*+}_1)$ positively.
By Lemmas \ref{1-9}, \ref{2-14} and \ref{2-16}, $(\Sigma _1,H^-_1,H^+_1)$ spans $(T_2,G^-_2,G^{*+}_2)$ positively.

Suppose $(\Sigma '_i,H^{\prime -}_i,H^{\prime +}_i)$ is a stabilization of $(\Sigma _i,H^-_i,H^+_i)$ for each $i=1,2$.
By Lemma \ref{2-12}, $(\Sigma '_i,H^{\prime -}_i,H^{\prime +}_i)$ spans $(T_1,G^-_1,G^{*+}_1)$ and $(T_2,G^-_2,G^{*+}_2)$ with the same signs as $(\Sigma _i,H^-_i,H^+_i)$.
If $(\Sigma '_1,H^{\prime -}_1,H^{\prime +}_1)$ and $(\Sigma '_2,H^{\prime -}_2,H^{\prime +}_2)$ are isotopic, the isotopy takes $H^{\prime -}_1$ to either $H^{\prime -}_2$ or $H^{\prime +}_2$.

Consider the case where the isotopy takes $H^{\prime -}_1$ to $H^{\prime -}_2$ and $H^{\prime +}_1$ to $H^{\prime +}_2$.
The Heegaard splitting $(\Sigma '_1,H^{\prime -}_1,H^{\prime +}_1)$ spans $(T_2,G^-_2,G^{*+}_2)$ positively and negatively.
If $(\Sigma '_1,H^{\prime -}_1,H^{\prime +}_1)$ splits $(T_2,G^-_2,G^{*+}_2)$, Lemma \ref{splitting} implies that the genus of\break $(\Sigma '_1,H^{\prime -}_1,H^{\prime +}_1)$ is at least $3k$.
By Lemma \ref{isotopy}, we can assume there is a pair of sweep-outs $f$ and $g_2$ representing $(\Sigma '_1,H^{\prime -}_1,H^{\prime +}_1)$ and $(T_2,G^-_2,G^{*+}_2)$ such that $f$ spans $g_2$ positively and negatively.
By Lemma \ref{spanning}, $f^{-1}(s_2) \cap M^*_2$ has genus at least $2k$ for some value $s_2\in (-1,1)$.
For a sweep-out $g_1$ representing $(T_1,G^-_1,G^{*+}_1)$, if $f$ splits $g_1$ then Lemma \ref{splitting} can be applied again.
Therefore we can assume $f$ spans $g_1$.
By Lemma \ref{spanning}, $f^{-1}(s_1) \cap M^*_1$ has genus at least $k$ for some value $s_1\in (-1,1)$.
Assume $s_1<s_2$ without loss of generality.
The intersection $M^*_1\cap M^*_2\cap f^{-1}([s_1,s_2])$ is a separating, planar surface properly embedded in a product space $f^{-1}([s_1,s_2])$.
By Lemma \ref{planar}, the genus of $\Sigma '_1$ is at least $k+2k=3k$.

On the other hand, when the isotopy takes $H^{\prime -}_1$ to $H^{\prime +}_2$ and $H^{\prime +}_1$ to $H^{\prime -}_2$, the Heegaard splitting $(\Sigma '_1,H^{\prime -}_1,H^{\prime +}_1)$ spans $(T_1,G^-_1,G^{*+}_1)$ positively and negatively.
The same argument implies that the genus of $\Sigma '_1$ is at least $3k$.
Thus, any common stabilization of $(\Sigma _1,H^-_1,H^+_1)$ and $(\Sigma _2,H^-_2,H^+_2)$ has genus at least $3k$.
\end{proof}

\end{document}